\def\bC{\mathbb{C}}
\def\cL{\mathcal{L}}
\def\cM{\mathcal{M}}
\def\cO{\mathcal{O}}
\def\bP{\mathbb{P}}
\def\cP{\mathcal{P}}
\def\bQ{\mathbb{Q}}
\def\cV{\mathcal{V}}
\def\bZ{\mathbb{Z}}
\def\barM{\overline{\cM}}
\def\wt{\widetilde}
\DeclareMathOperator{\Coeff}{Coeff}
\DeclareMathOperator{\dom}{dom}
\DeclareMathOperator{\ev}{ev}
\DeclareMathOperator{\exc}{exc}
\DeclareMathOperator{\Gr}{Gr}
\DeclareMathOperator{\id}{id}
\DeclareMathOperator{\Jac}{Jac}
\DeclareMathOperator{\Pic}{Pic}
\DeclareMathOperator{\spine}{sp}
\DeclareMathOperator{\vdim}{vdim}
\DeclareMathOperator{\vir}{vir}
\newtheorem{thm}{Theorem}
\newtheorem{lem}[thm]{Lemma}
\newtheorem{cor}[thm]{Corollary}
\newtheorem{prop}[thm]{Proposition}
\newtheorem{rem}[thm]{Remark}
\newtheorem{defn}[thm]{Definition}
\newtheorem{conjecture}[thm]{Conjecture}
\newcommand{\EE}{{\mathsf{E}}}
\newcommand{\Inc}{{\mathsf{Inc}}}
\newcommand{\PP}{{\mathsf{P}}}
\newcommand{\Tev}{{\mathsf{Tev}}}
\newcommand{\vTev}{{\mathsf{vTev}}}
\g@addto@macro\bfseries{\boldmath} 
\begin{document}

\title{Asymptotic geometric Tevelev degrees of hypersurfaces}

\author{Carl Lian}

\address{Humboldt-Universit\"at zu Berlin, Institut f\"ur Mathematik,  Unter den Linden 6
\hfill \newline\texttt{}
 \indent 10099 Berlin, Germany} \email{{\tt liancarl@hu-berlin.de}}

\begin{abstract}
Let $(C,p_1,\ldots,p_n)$ be a fixed general pointed curve and let $(X,x_1,\ldots,x_n)$ be a smooth hypersurface of degree $e$ and dimension $r$ with $n$ {general} points. We consider the problem of enumerating maps $f:C\to X$ of degree $d$ (as measured in the ambient projective space) such that $f(p_i)=x_i$. When $e$ is small compared to $r$ and $d$ is large compared to $g$, $e$, and $r$, these numbers have been computed first by passing to a virtual count in Gromov-Witten theory obtained by Buch-Pandharipande, and then by showing (in work of the author with Pandharipande) that the virtual counts are enumerative via an analysis of boundary contributions in the moduli space of stable maps. In this note, we give a simpler computation via projective geometry.
\end{abstract}

\maketitle

\section{Introduction}

\subsection{Tevelev degrees}
Let $X$ be a smooth projective variety over $\mathbb{C}$ of dimension $r$, and let $\beta\in H_2(X,\bZ)$ be a non-zero effective curve class. Fix $g\geq 0$ and $n\geq 0$ in the stable range $2g-2+n>0$ so that the moduli space of stable curves $\barM_{g,n}$ is non-empty.

Let $\barM_{g,n}(X,\beta)$ the moduli space of stable maps, and let $\cM_{g,n}(X,\beta)$ be the open subspace of maps with smooth domain. We consider the canonical morphisms
\begin{align*}\label{tev_maps}
\tau&:\cM_{g,n}(X,\beta)\to\cM_{g,n}\times X^n\\
\overline{\tau}&:\barM_{g,n}(X,\beta)\to\barM_{g,n}\times X^n
\end{align*}
remembering in the first factor the (stabilized) domain curve of a stable map, and in the second factor the images of the marked points.

We will be concerned with the (virtual) degrees of the maps $\tau,\overline{\tau}$, counting maps $f:C\to X$ of degree $\beta$ with $f(p_i)=x_i$, where $(C,p_1,\ldots,p_n)$ is a fixed general curve and $x_1,\ldots,x_n\in X$ are general points. The moduli space of stable maps
  $\barM_{g,n}(X,\beta)$ has virtual dimension equal to the dimension of $\barM_{g,n}\times X^n$
  if and only if
  \begin{equation}\label{vir_dims_match}
    \int_\beta c_1(T_X)=r(n+g-1)\, .
  \end{equation}
If the dimension constraint \eqref{vir_dims_match} holds, then we expect to find a 
finite number of maps in question. We will assume \eqref{vir_dims_match}, often without mention, throughout this paper.
  
If it is further the case that all \textit{dominating} components of $\cM_{g,n}(X,\beta)$ are generically smooth of the expected dimension, then we may define the geometric Tevelev degrees of $X$ to be the corresponding degrees of $\tau$.

\begin{defn}\label{geom_tev_def}
Assume that \eqref{vir_dims_match} holds and that all components of $\cM_{g,n}(X,\beta)$ dominating $\cM_{g,n}\times X^n$ under the map
\begin{equation*}
\tau:\cM_{g,n}(X,\beta)\to\cM_{g,n}\times X^n
\end{equation*}
are generically smooth of the expected dimension. Then, we define the \textbf{geometric Tevelev degree} $\Tev^{X}_{g,\beta,n}\in\bZ$ to be the (set-theoretic) degree of $\tau$.
\end{defn}

By definition, the geometric Tevelev degree $\Tev^X_{g,\beta,n}$ counts maps with \textit{smooth} domain. This count, when defined, is a priori always a non-negative integer because automorphism groups never appear in a general fiber of $\tau$, see \cite[Remark 4]{lp}. 

As is well-known, the components of $\cM_{g,n}(X,\beta)$ can behave wildly and in particular have the wrong dimension, in which case the geometric Tevelev degrees may not be well-defined. On the other hand, upon passing to $\barM_{g,n}(X,\beta)$, one gains access to a virtual fundamental class living in the expected homological dimension, and its pushforward by $\tau$ allows one to define virtual Tevelev degrees in greater generality. 

\begin{defn}
Assume that \eqref{vir_dims_match} holds. The \textbf{virtual Tevelev degree} $\vTev^{X}_{g,\beta,n}\in\bQ$ is defined by
\begin{equation*}
\overline{\tau}_{*}([\barM_{g,n}(X,\beta)]^{\vir}) = \vTev^{X}_{g,\beta,n} \cdot [\barM_{g,n}\times X^n] \in A^0(\barM_{g,n}\times X^n),
\end{equation*}
where 
\begin{equation*}
\overline{\tau}:\barM_{g,n}(X,\beta)\to\barM_{g,n}\times X^n
\end{equation*}
is as above.
\end{defn}

While the enumeration of maps with fixed domain, viewed from various perspectives, is an old question, the systematic study of Tevelev degrees has received renewed interest after a result of Tevelev \cite[Theorem 6.2]{Tev} showing that $\Tev^{\bP^1}_{g,(g+1)[\bP^1],g+3}=2^g$ (see Definition \ref{geom_tev_def}) for all $g\ge0$. We review some of the subsequent results.

\subsection{Calculations of virtual Tevelev degrees}

The virtual Tevelev degrees are, at least in principle, easier to compute, owing to the following formula of Buch-Pandharipande.

\begin{thm}\cite[Theorem 1.3]{bp}\label{virtev_formula}
Assume that \eqref{vir_dims_match} holds. Then, we have:
\begin{equation*}
\vTev^X_{g,\beta,n}=\Coeff(\PP^{\star n}\star \EE^{\star g},q^\beta \PP).
\end{equation*}
\end{thm}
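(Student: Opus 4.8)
The plan is to deduce the formula from the splitting (degeneration) axiom for Gromov--Witten classes, reducing everything to genus $0$, and then to recognize the resulting combinatorial sum as the stated quantum-cohomological expression. Write $p$ for the composition of $\overline{\tau}$ with the projection $\barM_{g,n}\times X^n\to\barM_{g,n}$ and $\ev_1,\dots,\ev_n$ for its compositions with the projections to the factors of $X^n$, so that $\overline{\tau}=(p,\ev_1,\dots,\ev_n)$. Since $\overline{\tau}_{*}([\barM_{g,n}(X,\beta)]^{\vir})$ is a multiple of the identity $[\barM_{g,n}\times X^n]$ of the Chow ring, capping with the class of a point and using the projection formula gives
\begin{equation*}
\vTev^{X}_{g,\beta,n}=\int_{[\barM_{g,n}(X,\beta)]^{\vir}}p^{*}[C_0]\cup\ev_1^{*}\PP\cup\cdots\cup\ev_n^{*}\PP,
\end{equation*}
where $\PP\in A^{r}(X)$ is the class of a point (the $x_i$ being general) and $[C_0]\in A^{3g-3+n}(\barM_{g,n})$ is the class of the unique stable curve $C_0$ with a chosen trivalent dual graph $\Gamma$; such a $\Gamma$---necessarily with $b_1(\Gamma)=g$, with $n$ legs, $n+2g-2$ vertices, and $n+3g-3$ edges---exists throughout the stable range, and every component of $C_0$ is then a $\bP^1$ carrying exactly three special points.

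Next I would apply the splitting axiom for virtual fundamental classes along the clutching morphism $\xi_{\Gamma}$ with image $C_0$: the restriction of $[\barM_{g,n}(X,\beta)]^{\vir}$ over $C_0$ is obtained, up to the factor $|\mathrm{Aut}(\Gamma)|$, by gluing the virtual classes of the spaces $\barM_{0,3}(X,\beta_v)$ along the diagonal $\Delta_X\subset X\times X$ at each node. Writing $\Delta_X=\sum_i e_i\otimes e^{i}$ in Poincar\'e-dual bases of $H^{*}(X)$, the integral above becomes a sum---over effective decompositions $\beta=\sum_{v}\beta_v$ indexed by the vertices of $\Gamma$, and over labelings of the edges of $\Gamma$ by the indices $i$---of the product over the vertices $v$ of the three-pointed genus-$0$ Gromov--Witten invariants $\langle\,\cdots\,\rangle_{0,\beta_v}$, whose three insertions are $\PP$ at each leg at $v$ and, at each half-edge at $v$, the class $e_i$ or $e^{i}$ dictated by the label on the incident edge. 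No gravitational descendents arise, since each component of $C_0$ is a rigid $\bP^1$ with precisely three special points and all the marked-point insertions are pulled back from the cohomology of $X$.

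Finally I would repackage this sum using quantum cohomology. The three-pointed genus-$0$ invariants are, by definition, the structure constants of the small quantum product $\star$, via $a\star b=\sum_{\beta'}q^{\beta'}\sum_i\langle a,b,e_i\rangle_{0,\beta'}\,e^{i}$; combined with the completeness relation $\sum_i e_i\otimes e^{i}=\Delta_X$ used to glue each edge, this means that summing over the label on an edge of $\Gamma$ effects one quantum multiplication. Contracting a spanning tree of $\Gamma$, the $n$ legs then contribute $\PP^{\star n}$ and each of the $b_1(\Gamma)=g$ independent cycles contributes a factor $\EE=\sum_i e_i\star e^{i}$, the class implementing the gluing of a handle; associativity of $\star$ (the WDVV equations) makes the order of these operations immaterial, and since $\int_{X}(-)=\Coeff(-,\PP)$ accounts for the final vertex, the whole sum collapses to $\Coeff(\PP^{\star n}\star\EE^{\star g},\,q^{\beta}\PP)$. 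I expect the crux to be the splitting step: one must verify that the virtual class restricts correctly over the deepest boundary stratum of $\barM_{g,n}$, decomposing along $\Gamma$ with exactly the diagonal gluing and the correct automorphism factor, and that only genus-$0$ three-pointed invariants---nothing involving descendents or higher genus---ever occur. This is where the formalism of virtual fundamental classes, rather than any explicit projective geometry, does the essential work.
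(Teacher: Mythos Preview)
This statement is not proved in the present paper; it is quoted from Buch--Pandharipande \cite{bp} as background, and the paper's own contribution lies elsewhere (the projective-geometric computation of \emph{geometric} Tevelev degrees for hypersurfaces in \S2). So there is no ``paper's own proof'' to compare against here.

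That said, your outline is essentially the standard argument, and it is the one carried out in \cite{bp}. Degenerating the domain to a maximally degenerate curve with trivalent dual graph, applying the splitting axiom for virtual classes to reduce to products of three-point genus-zero invariants, and then recognizing the resulting sum as $\Coeff(\PP^{\star n}\star\EE^{\star g},q^\beta\PP)$ via the definition of the small quantum product and the quantum Euler class---this is precisely their approach. The only places requiring care are the automorphism bookkeeping (either choose $\Gamma$ with trivial automorphism group when possible, or track the factor consistently through the degree of the clutching map onto its image; your phrase ``up to the factor $|\mathrm{Aut}(\Gamma)|$'' needs to be made precise so that the factor actually cancels) and, as you correctly flag, the verification that the virtual class restricts compatibly over the deepest boundary stratum. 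Both are handled by the standard axioms for virtual fundamental classes, so your identification of the crux is accurate.
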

Here, $\PP\in H^{*}(X)\subset QH^{*}(X)$ denotes the point class in the small quantum cohomology ring of $X$, $\EE$ denotes the \textit{quantum Euler class}
\begin{equation*}
\EE=\sum_j \gamma_j^{\vee}\star\gamma_j \in QH^{*}(X)
\end{equation*}
where $\{\gamma_j\},\{\gamma^{\vee}_j\}$ are dual bases for $H^{*}(X)$ with respect to the (classical) cup product, $\star$ denotes the quantum product, and $\Coeff(\alpha,q^{\beta}\PP)$ denotes the $q^{\beta}\PP$-coefficient when $\alpha$ is expanded as a power series in the quantum variable $q$.

From Theorem \ref{virtev_formula}, one can obtain explicit formulas for virtual Tevelev degrees in a number of examples. In the case $X=\bP^r$, we have:
\begin{thm}\cite[(3)]{bp}\label{proj_vir}
Assume that \eqref{vir_dims_match} holds, that is, that $n=\frac{r+1}{r}\cdot d-g+1$. Then,
\begin{equation*}
\vTev^{\bP^r}_{g,d,n}=(r+1)^g.
\end{equation*}
\end{thm}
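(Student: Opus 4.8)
The plan is to derive Theorem~\ref{proj_vir} directly from the Buch--Pandharipande formula in Theorem~\ref{virtev_formula} by computing in the small quantum cohomology ring $QH^*(\bP^r)$, which is completely explicit. Recall that $QH^*(\bP^r) = \bQ[H,q]/(H^{r+1} - q)$, where $H$ is the hyperplane class, so that the point class is $\PP = H^r$, and the quantum product is governed by the single relation $H^{\star(r+1)} = q$. The first step is to identify the quantum Euler class $\EE$. Taking the basis $\{\gamma_j\} = \{1, H, H^2, \ldots, H^r\}$, the cup-product-dual basis is $\{\gamma_j^\vee\} = \{H^r, H^{r-1}, \ldots, 1\}$, so $\EE = \sum_{j=0}^r H^{r-j} \star H^j = (r+1) H^{\star r}$. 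Using the quantum relation, $H^{\star r}$ is just the classical $H^r = \PP$ (no $q$ appears until the $(r+1)$st power), so $\EE = (r+1)\PP$ as an element of $QH^*(\bP^r)$.

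The second step is to evaluate $\PP^{\star n} \star \EE^{\star g}$. Since $\EE = (r+1)\PP$, we get $\PP^{\star n} \star \EE^{\star g} = (r+1)^g\, \PP^{\star(n+g)}$. It remains to compute $\PP^{\star(n+g)} = (H^r)^{\star(n+g)} = H^{\star r(n+g)}$ and extract the coefficient of $q^d \PP = q^d H^r$. Writing $r(n+g) = (r+1)k + \ell$ with $0 \le \ell \le r$, the quantum relation gives $H^{\star r(n+g)} = q^k H^\ell$, and this equals $q^d H^r$ precisely when $k = d$ and $\ell = r$, i.e. $r(n+g) = (r+1)d + r$, equivalently $rn = (r+1)d - rg + r$, i.e. $n = \frac{r+1}{r}d - g + 1$ — which is exactly the dimension constraint~\eqref{vir_dims_match} in the case $X = \bP^r$ (using $\int_d c_1(T_{\bP^r}) = (r+1)d$). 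Hence under the stated hypothesis the coefficient is exactly $(r+1)^g$, and $\vTev^{\bP^r}_{g,d,n} = (r+1)^g$.

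I do not anticipate a genuine obstacle here: the only subtlety is bookkeeping with the grading to confirm that the dimension constraint forces $\ell = r$ rather than merely $\ell \equiv r$, so that no lower-degree-in-$q$ terms contribute and no overshoot occurs. One should also double-check that $\PP \in QH^*(\bP^r)$ is being used as the honest point class with no quantum corrections in its own definition (it is, being a classical class), and that the expansion in $q$ is unambiguous since $QH^*(\bP^r)$ is a free $\bQ[q]$-module. A remark worth including is that this computation is the ``$e=0$'' (or ``$e=1$'', the hyperplane being linearly equivalent to... ) baseline against which the hypersurface calculations of the rest of the paper are measured; in particular it already exhibits the clean exponential form $(\deg)^g$ that one expects to persist, suitably modified, for low-degree hypersurfaces.
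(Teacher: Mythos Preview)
Your argument is correct. Note, however, that the paper does not actually give its own proof of Theorem~\ref{proj_vir}: it is quoted from \cite[(3)]{bp} as background, alongside Theorem~\ref{virtev_formula}. What you have written is precisely the standard derivation of this fact from Theorem~\ref{virtev_formula}, using the explicit presentation $QH^*(\bP^r)=\bQ[H,q]/(H^{r+1}-q)$; this is essentially how the formula is obtained in \cite{bp} as well, so there is no genuinely different route to compare.

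A couple of minor cleanups. First, your worry about ``$\ell\equiv r$ versus $\ell=r$'' is a non-issue: in this ring $H^{\star m}$ is a single monomial $q^kH^\ell$ with $0\le\ell\le r$, so the $q^d\PP$-coefficient is either $1$ (when the dimension constraint holds) or $0$; there is no possibility of spurious contributions. Second, the closing remark about ``$e=0$'' or ``$e=1$'' is muddled and should be dropped or rewritten --- $\bP^r$ is not literally a degree-$0$ hypersurface in $\bP^{r+1}$, and the hyperplane case $e=1$ gives $\bP^r\subset\bP^{r+1}$ with the \emph{same} formula, so the analogy is cleaner than you suggest.
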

Here, we use $\beta=d$ to denote the class of a curve of degree $d$. Note that in order for $n=\frac{r+1}{r}\cdot d-g+1$ to be an integer, we require $r|d$. A closely related virtual count was obtained by Bertram-Daskalopoulos-Wentworth before the systematic development of virtual fundamental classes in Gromov-Witten theory, see \cite[Theorem 2.9]{BDW}.

For low degree hypersurfaces, there is a similar result. For a hypersurface $X_e\subset \mathbb{P}^{r+1}$ of degree $e$, condition \eqref{vir_dims_match} becomes
\begin{equation}\label{vir_dims_match_hypersurface}
n=\frac{r+2-e}{r}\cdot d-g+1.
\end{equation}

\begin{thm}\cite[Theorem 1.5]{bp}\label{virtualtev_hypersurface} Let
    $X_e\subset \mathbb{P}^{r+1}$ be a smooth hypersurface of degree
    $e$ and dimension $r$. Suppose that $3\le e\le\frac{r+3}{2}$. Then, for all $g,d,n\ge0$ satisfying $2g-2+n>0$ and \eqref{vir_dims_match_hypersurface}, we have
    \begin{equation*}
    \vTev^{X_e}_{g,d,n} =  ((e-1)!)^{n} \cdot (r+2-e)^g\cdot  e^{(d-n)e-g+1}.
    \end{equation*}
   
   \end{thm}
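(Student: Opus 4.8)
The plan is to derive the formula from the Buch--Pandharipande identity of Theorem~\ref{virtev_formula},
\begin{equation*}
\vTev^{X_e}_{g,d,n}=\Coeff\big(\PP^{\star n}\star\EE^{\star g},\,q^{d}\PP\big),
\end{equation*}
by computing explicitly in the small quantum cohomology ring $QH^{*}(X_e)$, which in the stated range is about as simple as it could be. Decompose $H^{*}(X_e;\bC)=H^{*}_{\mathrm{amb}}\oplus H^{r}_{\mathrm{prim}}$, with $H^{*}_{\mathrm{amb}}$ spanned by the classical powers $1,h^{\cup 1},\dots,h^{\cup r}$ of the hyperplane class and $H^{r}_{\mathrm{prim}}$ the primitive cohomology. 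Recall (Beauville's description of the quantum cohomology of Fano complete intersections) that for $e\le r+1$ the ambient subalgebra of $QH^{*}(X_e)$ is $R_e:=\bC[h,q]/(h^{r+1}-e^{e}q\,h^{e-1})$, the letter $h$ now standing for the \emph{quantum} hyperplane class. Put $m:=r+2-e$. After adjoining $q^{1/m}$, the ring $R_e$ becomes the product of the nilpotent ring $\bC((q^{1/m}))[h]/(h^{e-1})$ and the semisimple ring $\bC((q^{1/m}))[h]/(h^{m}-e^{e}q)$, the latter with idempotents $e_{0},\dots,e_{m-1}$ at which $h$ takes values $h_{\lambda}$ with $h_{\lambda}^{m}=e^{e}q$.

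\textbf{Reduction.} Using the two-dimensional TQFT structure carried by the Frobenius algebra $(QH^{*}(X_e),\star,\int_{X_e})$ underlying Theorem~\ref{virtev_formula}, one has, for $g\ge 1$,
\begin{equation*}
\Coeff\big(\PP^{\star n}\star\EE^{\star g},\,q^{d}\PP\big)=[q^{d}]\,\mathrm{Tr}\big(M_{\PP}^{\,n}M_{\EE}^{\,g-1}\big),
\end{equation*}
$M_{\PP},M_{\EE}$ being quantum multiplication by the point class and by the handle element $\EE$ on $QH^{*}(X_e)$; the case $g=0$ (which forces $n\ge 3$) is handled directly and is strictly easier. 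I would then show that in the stated range only the semisimple ring above contributes to the trace: multiplication by $\PP=\tfrac1e h^{\cup r}$ is nilpotent on the nilpotent factor, and it annihilates $H^{r}_{\mathrm{prim}}$ after finitely many applications --- using that a genus-$0$ Gromov--Witten invariant of $X_e$ with an odd number of primitive insertions vanishes by Picard--Lefschetz monodromy, and that the hypothesis $e\le\tfrac{r+3}{2}$ removes the low-degree corrections which could otherwise couple $\PP$ to a single primitive class. Granting this,
\begin{equation*}
\vTev^{X_e}_{g,d,n}=[q^{d}]\sum_{\lambda=0}^{m-1}p_{\lambda}^{\,n}\,\theta_{\lambda}^{\,1-g},
\qquad p_{\lambda}:=\PP|_{e_{\lambda}},\quad \theta_{\lambda}:=\int_{X_e}e_{\lambda}.
\end{equation*}

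\textbf{The two inputs.} The Poincar\'e pairing makes $R_e$ a Frobenius algebra in which $x\mapsto\int_{X_e}x$ is $e$ times the ``coefficient of $h^{r}$'' functional attached to $g(h)=h^{r+1}-e^{e}q\,h^{e-1}$, so $\theta_{\lambda}=e/g'(h_{\lambda})$; since $h_{\lambda}^{m}=e^{e}q$ and $m+e-2=r$ one finds $g'(h_{\lambda})=(r+1)h_{\lambda}^{r}-(e-1)h_{\lambda}^{m+e-2}=m\,h_{\lambda}^{r}$, hence $\theta_{\lambda}=e/(m\,h_{\lambda}^{r})$. For $p_{\lambda}$ one must pass from classical to quantum powers of $h$: in the range $e\le\tfrac{r+3}{2}$ only degree-$1$ corrections occur, so $h^{\cup r}=h^{\star r}-c_{r}\,q\,h^{\star(e-2)}$ for an explicit line count $c_{r}$, and since $q\,h_{\lambda}^{e-2}=h_{\lambda}^{r}/e^{e}$ this gives $p_{\lambda}=\tfrac1e(1-c_{r}/e^{e})h_{\lambda}^{r}$. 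Comparing the $q^{1}$-parts of the two sides of the ring relation $h^{\star(r+1)}=e^{e}q\,h^{\star(e-1)}$ reduces $c_{r}$ to a single Gromov--Witten number of lines, and the key geometric input is that $c_{r}=e^{e}-e!$, equivalently $p_{\lambda}=(e-1)!\,h_{\lambda}^{r}/e^{e}$: the factorial arises because the variety of lines on $X_e$ through a general point is a complete intersection of hypersurfaces of degrees $2,3,\dots,e$ in $\bP^{r-1}$, hence has degree $e!$.

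\textbf{Assembly.} With $\theta_{\lambda}=e/(m\,h_{\lambda}^{r})$ and $p_{\lambda}=(e-1)!\,h_{\lambda}^{r}/e^{e}$,
\begin{equation*}
p_{\lambda}^{\,n}\theta_{\lambda}^{\,1-g}=\frac{((e-1)!)^{n}\,e^{\,1-g}\,m^{\,g-1}}{e^{\,en}}\;h_{\lambda}^{\,r(n+g-1)}.
\end{equation*}
The dimension constraint \eqref{vir_dims_match_hypersurface} says exactly $r(n+g-1)=m\,d$, so $h_{\lambda}^{\,r(n+g-1)}=(h_{\lambda}^{m})^{d}=(e^{e}q)^{d}$ is independent of $\lambda$ and $\sum_{\lambda}p_{\lambda}^{\,n}\theta_{\lambda}^{\,1-g}=((e-1)!)^{n}\,m^{g}\,e^{\,ed-en+1-g}\,q^{d}$; extracting the coefficient of $q^{d}$ yields $\vTev^{X_e}_{g,d,n}=((e-1)!)^{n}(r+2-e)^{g}e^{(d-n)e-g+1}$, as claimed. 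The step I expect to be the real obstacle is the Reduction together with the identity $c_{r}=e^{e}-e!$: one must be sure that $H^{r}_{\mathrm{prim}}$ and the nilpotent part of $R_e$ really make no contribution to the $q^{d}$-coefficient --- and also treat the boundary case $e=\tfrac{r+3}{2}$, where conics on $X_e$ first intervene --- which is precisely where the hypothesis $e\le\tfrac{r+3}{2}$ gets used, being the bound under which the low-degree rational curves on $X_e$ are rigid enough that the relevant quantum products receive only line corrections.
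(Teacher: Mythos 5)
First, a point of orientation: the paper does not actually prove this statement --- it is quoted from \cite[Theorem 1.5]{bp}, and the paper's own contribution is an independent, projective-geometric derivation of the same numbers (as \emph{geometric} Tevelev degrees, under the stronger hypothesis $r>(e+1)(e-2)$). Your sketch is, in outline, the proof of the cited reference rather than anything in this paper: reduce the formula $\vTev^{X_e}_{g,d,n}=\Coeff(\PP^{\star n}\star\EE^{\star g},q^{d}\PP)$ of Theorem \ref{virtev_formula} to a Frobenius-algebra trace, pass to the semisimple quotient $\bC((q^{1/m}))[h]/(h^{m}-e^{e}q)$ of Beauville's presentation of the ambient quantum cohomology, and feed in $\theta_\lambda=e/((r+2-e)h_\lambda^{r})$ and $p_\lambda=(e-1)!\,h_\lambda^{r}/e^{e}$. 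Your numerology is correct: $g'(h_\lambda)=(r+2-e)h_\lambda^{r}$, the constraint \eqref{vir_dims_match_hypersurface} gives $r(n+g-1)=(r+2-e)d$, so $h_\lambda^{r(n+g-1)}=(e^{e}q)^{d}$ is independent of $\lambda$ and the assembly yields the stated formula. It is instructive to compare where the two routes produce the individual factors. In yours, $(e-1)!$ comes from the quantum correction $h^{\star r}=h^{\cup r}+(e^{e}-e!)\,q\,h^{\cup(e-2)}$, i.e.\ from the degree $e!=2\cdot3\cdots e$ of the scheme of lines on $X_e$ through a general point; in the paper it appears as $\prod_{k=2}^{e}(k-1)$ inside the excess class $\prod_{k=1}^{e}((k-1)H+(e+1-k)H_i)$ of the virtual proper transform (Lemma \ref{exc_class_virproptrans}), with the extra $e$ from the $k=1$ factor cancelled against the $e$ points of $L_i\cap X$. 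Likewise $(r+2-e)^{g}$ arises for you by summing over the $r+2-e$ idempotents, and in the paper from the binomial sum $\sum_{m}\binom{g}{m}(-e)^{m}(r+2)^{g-m}$ of Chern and Segre classes over $\Jac^{d}(C)$ in Proposition \ref{degree}. The paper's route costs more geometry up front but buys the geometric count directly; yours stays entirely inside quantum cohomology.

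The one genuine gap is the step you yourself flag, and it should not be underestimated: the ``Reduction.'' The quantum Euler class $\EE=\sum_j\gamma_j^{\vee}\star\gamma_j$ runs over a basis of all of $H^{*}(X_e)$, including the primitive cohomology $H^{r}_{\mathrm{prim}}$, whose dimension grows without bound --- yet no trace of it survives in the answer. Proving that the primitive classes (together with the nilpotent factor $\bC((q^{1/m}))[h]/(h^{e-1})$) contribute nothing to the $q^{d}\PP$-coefficient is the main technical content of the proof in \cite{bp}; the present paper explicitly singles it out as ``a non-trivial aspect.'' It requires understanding the products $\PP\star\gamma$ for $\gamma$ primitive, not merely the Picard--Lefschetz parity observation, and it is where the upper bound $e\le\frac{r+3}{2}$ (and the exclusion of $e=2$) is really consumed. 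Your one-line disposal of the nilpotent factor is fine for $e\ge3$ (there $\PP$ restricts to a multiple of $qh^{e-2}$, which squares to zero), but the primitive part cannot be dispatched so quickly. In short: correct strategy and correct computation, but the plan reconstructs the proof of the quoted reference rather than the paper's argument, and its hardest step is asserted rather than carried out.
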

The case of quadric hypersurfaces $(e=2)$ is exceptional, see \cite[Theorem 1.4]{bp}.

A non-trivial aspect of the proof of Theorem \ref{virtualtev_hypersurface} is controlling the contributions from primitive cohomology of $X$. The explicit computation of $\vTev^{X_e}_{g,d,n}$ in the remainder of the Fano range $\frac{r+4}{2}\le e\le r+1$ remains open, but Cela \cite{cela} has given partial results.

See also \cite[\S3-4]{bp} for results on $\vTev^{X}_{g,\beta,n}$ when $X=G/P$ (which also includes the case of quadric hypersurfaces).

\subsection{Calculations of geometric Tevelev degrees}

The computation of geometric Tevelev degrees is more subtle. Complete results were first given in the case $X=\bP^1$ by Cela-Pandharipande-Schmitt \cite{cps} via intersection theory on Hurwitz spaces of branched covers. Brill-Noether theory guarantees that the geometric Tevelev degrees of $X=\bP^1$ (in fact, of any $\bP^r$) are well-defined for all $d\ge1$.
\begin{thm}\cite[Theorem 6]{cps}\label{cps_thm}
For all $g,d,n\ge0$ satisfying $2g-2+n>0$ and $n=2d-g+1$, we have:
\begin{equation*}
\Tev^{\bP^1}_{g,d,n}=2^g-\sum_{i=0}^{g-d-1}\binom{g}{i}+(g-d-1)\binom{g}{g-d}+(d-g-1)\binom{g}{g-d+1}.
\end{equation*}
\end{thm}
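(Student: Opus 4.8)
Theorem~\ref{cps_thm} is proved in \cite{cps} by intersection theory on Hurwitz spaces; here is an alternative plan in the projective-geometry spirit of this paper, which in particular reproves the asymptotic value geometrically. I would realize $\Tev^{\bP^1}_{g,d,n}$ as an intersection number on a space of degree-$d$ pencils on the fixed curve $(C,p_1,\dots,p_n)$. A degree-$d$ morphism $f\colon C\to\bP^1$ is, up to rescaling, a pair $(L,\psi)$ with $L\in\Pic^d(C)$ and $\psi\colon U\to H^0(C,L)$ injective with base-point-free image, where $U:=H^0(\bP^1,\mathcal O(1))$ and $f$ is the induced map $C\to\bP(U^\vee)=\bP^1$. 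Fixing a Poincar\'e bundle $\cP$ on $C\times\Pic^d(C)$ normalized so that $\cP|_{\{p\}\times\Pic^d(C)}$ is numerically trivial, and putting $\cE:=p_{2*}\cP$, the space of such maps is the base-point-free locus inside $\bP(U^\vee\otimes\cE)$; since $U$ is trivial of rank $2$ one has $U^\vee\otimes\cE\cong\cE^{\oplus 2}$, and $\bP(\cE^{\oplus 2})\to\Pic^d(C)$ has total dimension $g+(2(d-g+1)-1)=2d-g+1=n$, matching the incidences. The condition $f(p_i)=x_i$ asks that $\ev_{p_i}\circ\psi$ annihilate the line of $U$ dual to $x_i$; globalizing, this is the vanishing of a section of $\mathcal O_{\bP(\cE^{\oplus 2})}(1)\otimes\pi^*\cL_{p_i}$ for a numerically trivial line bundle $\cL_{p_i}$ on $\Pic^d(C)$, hence a divisor $D_i$ in the class $\zeta:=c_1(\mathcal O_{\bP(\cE^{\oplus 2})}(1))$.

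Granting that $\cE$ is locally free and that $D_1\cap\cdots\cap D_n$ avoids the non-base-point-free locus, we obtain
\[
\Tev^{\bP^1}_{g,d,n}=\int_{\bP(\cE^{\oplus 2})}\zeta^n=\int_{\Pic^d(C)}s_g\!\left(\cE^{\oplus 2}\right).
\]
A Grothendieck--Riemann--Roch computation for $p_{2*}\cP$ gives $\ch(\cE)=(d-g+1)-\theta$, with $\theta$ the theta divisor class, so that $c(\cE)=e^{-\theta}$ and $s(\cE^{\oplus 2})=e^{2\theta}$; since $\int_{\Pic^d(C)}\theta^g=g!$, this yields $\Tev^{\bP^1}_{g,d,n}=2^g$. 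This establishes Theorem~\ref{cps_thm} whenever both provisos hold --- in particular once $d\ge 2g-1$, where $\cE$ is automatically locally free and the non-base-point-free locus is a proper subvariety --- and reproves the asymptotic value $2^g$ (cf.\ Theorem~\ref{proj_vir} for $r=1$) without Gromov--Witten theory.

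For smaller $d$ the provisos can fail, and this is exactly where the geometric and virtual degrees diverge. First, $\cE=p_{2*}\cP$ jumps rank along the locus of special line bundles; one must check, e.g.\ by resolving $\cE$ or by a direct deformation argument, that this changes nothing for $g+1\le d\le 2g-2$, consistently with the corrections vanishing there. Second, and more seriously, for $d\le g$ the intersection $D_1\cap\cdots\cap D_n$ meets the non-base-point-free boundary $B\subset\bP(\cE^{\oplus 2})$, whose points are pairs of sections with a common factor $h$ of some degree $k\ge1$; the geometric Tevelev degree records only the points outside $B$. I would stratify $B$ by $k=\deg h$: along a stratum the residual datum is a base-point-free pencil of degree $d-k$, which collapses to a constant once $k=d$, and this forces all but at most one of the $p_i$ to be zeros of $h$ --- explaining why $B$ contributes only once $d\le g$. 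Computing each stratum's (typically excess) contribution by the excess-intersection formula and assembling the answer as an inclusion--exclusion over the subset of the $p_i$ absorbed by $h$, with the remaining $g$ incidences contributing binary choices, should produce $2^g-\sum_{i=0}^{g-d-1}\binom{g}{i}+(g-d-1)\binom{g}{g-d}+(d-g-1)\binom{g}{g-d+1}$, the range of the sum reflecting $\deg h\le d$.

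The main obstacle is this last step: rigorously identifying the base-point-free locus in a suitable compactification of the space of maps, pinning down the excess multiplicities, and handling the rank-jumping of $\cE$ --- equivalently, determining which degenerate limits smooth to maps with smooth domain, which is the content that separates $\Tev$ from $\vTev$. A more robust route to the corrections specializes $(C,p_1,\dots,p_n)$ to a rational curve with $g$ non-separating nodes and analyzes boundary contributions in the resulting space of admissible covers or stable maps, as in \cite{cps} and in the $\barM_{g,n}(X,\beta)$-boundary analysis of \cite{lp}; there the factor $2^g$ appears tautologically as $g$ independent binary node-choices and the corrections as the configurations that fail to smooth. In all approaches the asymptotic term is easy and the boundary bookkeeping is the crux.
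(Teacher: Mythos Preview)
The paper does not itself prove Theorem~\ref{cps_thm}; it is quoted from \cite{cps} (via intersection theory on compactified Hurwitz spaces), with the equivalent Schubert formula of Theorem~\ref{fl_thm} quoted from \cite{fl}. Your asymptotic computation is correct and is exactly the $r=1$ instance of the projective-bundle-over-Jacobian calculation of \cite{BDW}, \cite{fl}, and the present paper: $\int_{\bP(\cE^{\oplus 2})}\zeta^{\,n}=\int_{\Jac^d(C)}s_g(\cE^{\oplus 2})=2^g$ via $s(\cE)=e^{\theta}$ and Poincar\'e's formula. Modulo the transversality and base-point-avoidance checks you flag (which for $\bP^1$ are elementary consequences of Brill--Noether theory, but are not consequences of local freeness of $\cE$ alone), this gives the theorem for $d\ge g+1$.

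There is, however, a genuine gap: you do not prove the full formula. For $d\le g$ you only outline a stratification of the base-point locus $B$ and assert that the excess contributions ``should produce'' the three correction terms, while explicitly naming this step ``the main obstacle.'' No excess class is actually computed on any stratum, the rank-jumping of $\cE$ along Brill--Noether loci (where $\cE$ is not even a vector bundle, so $\bP(\cE^{\oplus 2})$ is not the space you want) is not handled, and the asserted inclusion--exclusion shape is not derived. This missing bookkeeping is precisely the content separating Theorem~\ref{cps_thm} from its asymptotic special case; in \cite{cps} it is obtained by degenerating $C$ to a rational $g$-nodal curve and carrying out a careful admissible-cover count, and in \cite{fl} by limit linear series on a flag curve. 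Neither of these reduces to a routine excess-intersection calculation on $\bP(\cE^{\oplus 2})$ over a fixed smooth $C$, and you have not supplied one. As written, your proposal establishes only the $d\ge g+1$ case of Theorem~\ref{cps_thm}.
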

Binomial coefficients $\binom{g}{i}$ with $i<0$ are interpreted to vanish. In particular, $\Tev^{\bP^1}_{g,d,n}=2^g$ whenever $d\ge g+1$ (equivalently, $d\le n-2$), agreeing with the virtual count of Theorem \ref{proj_vir}.

Later work of the author with Farkas gave the following alternate formula via limit linear series and Schubert calculus.
\begin{thm}\cite[Theorem 1.3]{fl}\label{fl_thm}
For all $g,d,n\ge0$ satisfying $2g-2+n>0$ and $n=2d-g+1$, we have:
\begin{equation*}
\Tev^{\bP^1}_{g,d,n}=\int_{\Gr(2,d+1)}\sigma_1^g\sum_{i+j=2d-2-g}\sigma_i\sigma_j.
\end{equation*}
\end{thm}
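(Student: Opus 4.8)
The plan is to translate the enumeration of maps into a Schubert-calculus problem on $\Gr(2,d+1)$ by degenerating the source curve, and to read off the two factors in the formula from, respectively, the incidence conditions and the genus.

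First I would set up the following dictionary. A base-point-free map $f\colon C\to\bP^1$ of degree $d$ is the data of a line bundle $L$ of degree $d$ on $C$, a base-point-free two-dimensional subspace $V\subseteq H^0(C,L)$, and an isomorphism $\phi\colon\bP(V^\vee)\xrightarrow{\sim}\bP^1$; explicitly $f=\phi\circ\iota_V$, where $\iota_V\colon C\to\bP(V^\vee)$ sends $p$ to the hyperplane of sections in $V$ vanishing at $p$, equivalently to the line $V\cap H^0(L-p_i)\subseteq V$. Under this dictionary, for general $p_i\in C$ and general $x_i\in\bP^1$, an isomorphism $\phi$ realizing $f(p_i)=x_i$ for all $i$ exists, and is then unique, if and only if the configuration $\bigl(\iota_V(p_1),\dots,\iota_V(p_n)\bigr)$ of $n$ points of $\bP(V^\vee)$ is projectively equivalent to the fixed configuration $(x_1,\dots,x_n)$. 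This is a condition of codimension $n-3=2d-2-g$ on the pair $(L,V)$, the $-3$ accounting for the $\mathrm{PGL}_2$ acting on the target.

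Next I would degenerate $(C,p_1,\dots,p_n)$ to a reducible nodal curve $C_0$ consisting of a rational spine $R\cong\bP^1$ carrying all $n$ marked points, glued to $g$ elliptic components (for instance a chain $R\cup E_1\cup\dots\cup E_g$), and replace the count of maps by a count of Eisenbud--Harris limit linear series of rank $1$ and degree $d$ on $C_0$ satisfying the degenerated incidence conditions. On the spine the $R$-aspect of such a limit series is an honest pencil $V_R\subseteq H^0(\bP^1,\cO(d))$, i.e.\ a point of $\Gr(2,d+1)$, which has dimension $2d-2$, tautological quotient bundle $Q$ of rank $d-1$, and special Schubert classes $\sigma_i=c_i(Q)$ (so $\sigma_i=0$ for $i>d-1$). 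I would then show that the locus cut out on $R$ by the $n$ reformulated incidence conditions has class equal to the codimension-$(2d-2-g)$ part of $c(Q)^2=\bigl(\sum_i\sigma_i\bigr)^2$, namely $\sum_{i+j=2d-2-g}\sigma_i\sigma_j$ --- the two factors of $Q$ reflecting the two divisors $f^{-1}(0)$ and $f^{-1}(\infty)$ (equivalently, the two-dimensionality of $V_R$). Each elliptic component $E_k$ must carry a $\mathfrak g^1_d$ whose vanishing sequence at the node $R\cap E_k$ is constrained by Brill--Noether on $E_k$; compatibility of vanishing sequences at that node together with the Brill--Noether slack translates into the single Schubert condition $\sigma_1$ on $\Gr(2,d+1)$, for a total contribution $\sigma_1^g$. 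Multiplying these classes and pushing forward to a point gives $\int_{\Gr(2,d+1)}\sigma_1^g\sum_{i+j=2d-2-g}\sigma_i\sigma_j$.

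I expect the main obstacle to be enumerativity: showing that this limit count equals the geometric Tevelev degree on the general curve. Concretely one must (a) show that every degree-$d$ map on a general curve near $C_0$ with the given incidence conditions degenerates to one of the limit series counted above, and conversely that each such limit series is smoothable to exactly one map, with smoothing multiplicity $1$; (b) exclude excess contributions --- limit series with base points along the spine, with the ``wrong'' distribution of degree among components, or whose associated map contracts the spine; (c) handle the fact that an elliptic chain is not Brill--Noether general in the literal sense, so that pinning down which vanishing sequences actually occur (hence that the contribution of each $E_k$ is exactly $\sigma_1$) needs the refined dimension theory of limit linear series on elliptic curves; and (d) run the whole argument through special line bundles $L$, since for $d<g$ the general $L$ on $C$ has no sections at all. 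Items (a)--(b), i.e.\ transversality of the incidence conditions in the moduli of limit linear series and the vanishing of boundary contributions, are the real work; once the degenerate problem is correctly posed, identifying the relevant classes with $\sum_{i+j=2d-2-g}\sigma_i\sigma_j$ and with $\sigma_1^g$ and evaluating the product on $\Gr(2,d+1)$ is essentially formal. As a sanity check I would note that the integral vanishes when $2d-2-g<0$ (no $\mathfrak g^1_d$ on the general curve), and that evaluating $\sigma_1^g\sigma_i\sigma_j$ by Pieri --- only the range $\max(0,d-1-g)\le i\le d-1$ survives --- should recover the Cela--Pandharipande--Schmitt formula of Theorem \ref{cps_thm} after a binomial rearrangement, and in particular yield $2^g$ for $d\ge g+1$, matching the virtual count of Theorem \ref{proj_vir}.
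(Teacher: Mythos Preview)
The paper does not contain a proof of this statement; it is quoted as \cite[Theorem 1.3]{fl} and attributed there to ``limit linear series and Schubert calculus,'' with no further argument given in the present paper. So there is no proof here to compare against.

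That said, your outline is precisely the method the paper ascribes to \cite{fl}: degenerate the pointed curve to a flag curve with a rational spine carrying the $n$ marked points and $g$ elliptic tails, replace maps by limit $\mathfrak g^1_d$'s, and interpret the spine aspect as a point of $\Gr(2,d+1)$. The identification of the incidence class on the spine with $\sum_{i+j=2d-2-g}\sigma_i\sigma_j$ and of each elliptic tail's contribution with $\sigma_1$ is the expected shape of the argument, and you have correctly flagged the substantive difficulties (transversality of the limit count, ruling out degenerate aspects with base points or contracted spine, smoothability with multiplicity one). One minor slip: you describe $C_0$ as having a rational spine carrying all marked points but then parenthetically write ``for instance a chain $R\cup E_1\cup\dots\cup E_g$''; a chain would put the $E_k$ in series, whereas what you actually want (and describe elsewhere) is the flag curve with each $E_k$ attached directly to $R$.
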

That the expression on the right evaluates to $2^g$ when $d\ge g+1$ is non-trivial; a brute force calculation using the Pieri rule is possible, but a combinatorial proof via the RSK algorithm was given by Gillespie-Reimer-Berg \cite{grb}. The formulas of Theorems \ref{cps_thm} and \ref{fl_thm} have been extended to counts of covers of $\bP^1$ with arbitrary ramification profiles, see \cite{cl}.

The calculation of $\Tev^{\bP^r}_{g,d,n}$ for $r>1$ is considerably more difficult and will be addressed in forthcoming work \cite{l_coll}. However, when $d$ is large compared to $r$ and $g$, it is again true that the geometric and virtual Tevelev degrees agree:
\begin{thm}\cite[Theorem 1.2]{fl}
Suppose that $d\ge rg+r$, or equivalently that $d\le n-2$. Then, for all $g\ge0$, we have:
\begin{equation*}
\Tev^{\bP^r}_{g,d,n}=(r+1)^g.
\end{equation*}
\end{thm}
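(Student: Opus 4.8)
The plan is to deduce the geometric count from the virtual count of Theorem~\ref{proj_vir} by showing that, over a general point of $\barM_{g,n}\times(\bP^r)^n$, the fiber of $\overline\tau$ is a finite set of reduced points, all lying in the interior $\cM_{g,n}(\bP^r,d)$, along which the virtual class agrees with the honest fundamental class. Granting this, $\overline\tau_*([\barM_{g,n}(\bP^r,d)]^{\vir})$ is represented by this reduced fiber, whence $\Tev^{\bP^r}_{g,d,n}=\vTev^{\bP^r}_{g,d,n}=(r+1)^g$. I will assume $r\ge2$ throughout; the case $r=1$ is covered by Theorem~\ref{cps_thm}, whose formula specializes to $2^g$ precisely when $d\ge g+1$, i.e. $d\le n-2$.

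First I would establish unobstructedness. For a map $f:C\to\bP^r$ of degree $d$ from a smooth curve, pulling back the Euler sequence gives $0\to\cO_C\to L^{\oplus(r+1)}\to f^*T_{\bP^r}\to0$ with $L=f^*\cO(1)$ of degree $d$. Since $d\ge rg+r>2g-2$ for $r\ge2$, we have $H^1(C,L)=0$, and the long exact sequence then forces $H^1(C,f^*T_{\bP^r})=0$. Therefore $\cM_{g,n}(\bP^r,d)$ is smooth of the expected dimension everywhere, its virtual class restricts to the fundamental class on this interior, and in particular the geometric Tevelev degree is well-defined in the sense of Definition~\ref{geom_tev_def}. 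Next I would verify finiteness and transversality on the interior: parametrizing an interior map by a pair $(L,[s_0:\cdots:s_r])$ with $L\in\Pic^d(C)$ and a basepoint-free system of sections, the $n$ incidence conditions $f(p_i)=x_i$ impose $rn$ constraints on a parameter space of dimension $(r+1)(d-g+1)-1+g$, which by \eqref{vir_dims_match} has expected value $0$. Unobstructedness identifies $d\overline\tau$ at such a map with the natural evaluation on deformations, so a Bertini-type argument in the general points $x_1,\dots,x_n$ shows the fiber is reduced of dimension $0$, each point counting with multiplicity one.

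The main obstacle is boundary avoidance: I must show that for general $(C,p_1,\dots,p_n)$ and general $x_i$, no stable map with reducible or non-reduced domain lies in the fiber of $\overline\tau$. Such a map has domain $C$ with trees of rational curves attached, the ambient degree $d$ distributed between $C$ and the rational part, and its marked points returning, after stabilization, the fixed curve $(C,p_i)$. The strategy is a dimension estimate, uniform over the finitely many combinatorial types of boundary stratum: a rational tail carrying positive degree and forced through the general points $x_i$ it contains, together with the node-matching conditions where it meets $C$, has strictly fewer moduli than the incidence conditions it must satisfy. Here the hypothesis $d\le n-2$ is exactly what guarantees the deficit, preventing any boundary stratum from dominating $\barM_{g,n}\times(\bP^r)^n$ and keeping the general fiber inside $\cM_{g,n}(\bP^r,d)$. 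Carrying out this estimate cleanly for every stratum is the delicate point; once it is in hand, the three steps combine to yield $\Tev^{\bP^r}_{g,d,n}=(r+1)^g$. (An alternative, avoiding the stable-maps boundary entirely, would degenerate $(C,p_i)$ to a chain and run a limit linear series argument, reducing inductively to the genus-$0$ and genus-$1$ computations where the factor $r+1$ per handle appears directly; I expect the boundary estimate above to be the more economical route given that the virtual value is already supplied by Theorem~\ref{proj_vir}.)
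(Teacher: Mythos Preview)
The paper does not give its own proof of this statement; it is quoted from \cite{fl}, where the method is limit linear series and Schubert calculus after degenerating $(C,p_1,\ldots,p_n)$ to a flag curve---precisely the alternative you mention only parenthetically at the end. Your primary route is instead the strategy of \cite{lp}: take the virtual count from Theorem~\ref{proj_vir} and show it is enumerative by a boundary analysis on $\barM_{g,n}(\bP^r,d)$. Both approaches are legitimate; the limit-linear-series argument has the advantage of producing the sharp bound $d\ge rg+r$ directly and yielding closed formulas below that threshold as well (cf.\ Theorem~\ref{fl_thm}), while the stable-maps route leverages the already-known virtual degree and generalizes more readily to other targets.

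Your outline is reasonable, but the decisive step---the boundary dimension estimate---is left undone, and you correctly flag it as ``the delicate point.'' The subtlety is real: it does not suffice to note that each boundary stratum $\cM_\Gamma$ has virtual codimension at least one, because strata in which the genus-$g$ spine carries small degree can be obstructed (nonzero $h^1$ of the restricted tangent bundle on the spine), so their \emph{actual} dimension may meet or exceed that of the target. This is exactly why $\Tev^{\bP^r}\neq\vTev^{\bP^r}$ for small $d$. Making your argument rigorous requires bounding this excess against the combinatorics of $\Gamma$ and verifying that $d\le n-2$ forces a deficit in every case; this is essentially the content of \cite[Propositions~13--14, Lemma~16]{lp} and is not a formality.
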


This leads to the following conjecture.

\begin{conjecture}\label{enum_conj}
Let $X$ be a smooth, projective Fano variety and let $g$ be a fixed genus. Then, there exists a constant $C_{X,g}$ depending only on $X$ and $g$ such that, if $\int_{\beta}c_1(T_X)>C_{X,g}$, then
\begin{equation*}
\vTev^X_{g,\beta,n}=\Tev^X_{g,\beta,n}
\end{equation*}
That is, the virtual Tevelev degree is enumerative when $\beta$ is sufficiently positive. (In particular, the geometric Tevelev degree is well-defined.)
\end{conjecture}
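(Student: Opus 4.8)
We do not address Conjecture \ref{enum_conj} in the stated generality; following the abstract, the plan is to prove its assertion for the hypersurfaces $X_e\subset\bP^{r+1}$ with $3\le e\le\frac{r+3}{2}$, namely that once $d$ is large relative to $g,e,r$ (so in particular $d\le n-2$ by \eqref{vir_dims_match_hypersurface}), the degree $\Tev^{X_e}_{g,d,n}$ is well-defined and equals the value of Theorem \ref{virtualtev_hypersurface}. Well-definedness must come first. Since $X_e$ is Fano, a general degree-$d$ map $f:C\to X_e$ has $H^1(C,f^*T_{X_e})=0$, so the ``interior'' locus of $\cM_{g,n}(X_e,d)$ is smooth of the expected dimension; what must be ruled out is that some component dominating $\cM_{g,n}\times X_e^n$ consists entirely of maps that are base-point-contracting, or have image in a hyperplane section, or have Brill--Noether special underlying line bundle. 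A dimension count shows such maps sweep out a locus of dimension $<\dim(\cM_{g,n}\times X_e^n)$ for $d\gg0$; in fact, for general $x_i$ the conditions $f(p_i)=x_i$ already force the image to span $\bP^{r+1}$, since $n\gg0$ and general points of $X_e$ lie on no common hyperplane.

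The heart of the argument is to rewrite the count as a Chern number on an explicit parameter space. Fix $(C,p_1,\ldots,p_n)$ general. For $d\gg0$ every relevant map $f:C\to\bP^{r+1}$ has $L:=f^*\cO(1)\in\Pic^d(C)$ with $h^0(L)=d-g+1$, and $f$ is recorded by an ordered $(r+2)$-tuple $(s_0,\ldots,s_{r+1})$ of sections of $L$ up to common scaling; these tuples form an open subset of $P:=\bP(\Hom(\bC^{r+2},\cE))$, where $\cE$ is the rank-$(d-g+1)$ Picard bundle on $\Pic^d(C)$, so $\dim P=g+(r+2)(d-g+1)-1$. The conditions $f(p_i)=x_i$ cut out, fiberwise linearly, a projective subbundle $Z\to\Pic^d(C)$ with $\dim Z=M:=de-n-g+1$; and the condition $f(C)\subseteq X_e$, i.e. $F(s_0,\ldots,s_{r+1})=0$ in $H^0(L^e)$, becomes — because that section vanishes automatically at the $p_i$ once $x_i\in X_e$ — the vanishing of a section of a rank-$M$ bundle $\cF$ on $Z$, namely the Picard bundle of $L^e(-p_1-\cdots-p_n)$ twisted by the relative $\cO_Z(e)$. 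A dimension check confirms that this section cuts $Z$ down to dimension zero, so
\[
\Tev^{X_e}_{g,d,n}=\int_Z c_M(\cF),
\]
provided the section is transverse with zero locus consisting only of honest maps $C\to X_e$.

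It then remains to evaluate the integral. Since $Z$ is a projective bundle over the abelian variety $\Pic^d(C)$, whose Chern classes are classically expressible in terms of the theta divisor $\theta$ (with $\theta^g=g!\cdot[\mathrm{pt}]$ and $\theta^{g+1}=0$), $\int_Z c_M(\cF)$ reduces to extracting a single coefficient of an explicit power series in $\theta$ and the relative hyperplane class. One expects the three factors $((e-1)!)^n$, $(r+2-e)^g$, $e^{(d-n)e-g+1}$ to arise respectively from the $n$ point conditions — whose contribution localizes at each $p_i$ to the degree-$e$ geometry of $X_e$ there — from the $g$-th power of $\theta$, and from the ``free'' linear-system directions in the fibers of $Z$.

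The principal obstacle is exactly the one that made the Buch--Pandharipande/Lian--Pandharipande route delicate, now in geometric dress: one must show that for general $x_i$ the section of $\cF$ vanishes transversely and that its zeros are precisely base-point-free maps $C\to X_e$ with non-degenerate image — that is, that no ``phantom'' zeros come from base points, degenerate maps, multiple covers, or the Brill--Noether strata of $\Pic^d(C)$ — and it is here that the bound $e\le\frac{r+3}{2}$ is used, playing the role of Buch--Pandharipande's control of the primitive cohomology of $X_e$. By contrast, no boundary analysis on $\barM_{g,n}(X_e,d)$ is needed: $P$ is already a smooth projective bundle over an abelian variety, so beyond the dimension estimates the only real work is the theta-divisor bookkeeping above, which should be routine and is where the promised simplification lies.
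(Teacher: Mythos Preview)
Your proposal has the right overall architecture---parametrize maps by tuples of sections over $\Pic^d(C)$, impose the incidence and hypersurface conditions, and compute a top Chern class---but it misses the central technical obstacle that the paper is built to overcome, and as written the construction fails.

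The gap is this: your section of $\cF$ on $Z$ does \emph{not} have zero-dimensional vanishing locus, so the proviso ``provided the section is transverse with zero locus consisting only of honest maps'' simply fails, and not for a reason that can be repaired by a dimension estimate. Concretely, take any $0\neq s\in H^0(C,\cL(-p_1-\cdots-p_n))$ (nonempty once $d-n\ge g$, which for $e\ge3$ holds for all large $d$) and any point $[\lambda_0:\cdots:\lambda_{r+1}]\in X_e$; then $(s_0,\ldots,s_{r+1})=(\lambda_0 s,\ldots,\lambda_{r+1}s)$ has a base-point at every $p_i$, hence lies in $Z$ (your linear incidence conditions are vacuous at base-points), and $F(s_0,\ldots,s_{r+1})=F(\lambda)\cdot s^e=0$. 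This gives a family in $V(s_F)\subset Z$ of dimension $d-n+r>0$, so your zero locus is genuinely positive-dimensional and $\int_Z c_M(\cF)$ carries excess contributions that are not the Tevelev degree. Indeed, your integral cannot produce the factor $((e-1)!)^n$: that factor arises only after one modifies the construction at the base-point loci, and your bundle $\cF$ (sections of $\cL^{\otimes e}(-p_1-\cdots-p_n)$, rank $de-n-g+1$) has the wrong rank to yield the exponent $(d-n)e-g+1$ on $e$.

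What the paper does to fix this is the main point of the argument. Rather than imposing $f(p_i)=x_i$ directly, one passes to the blowup $Y\subset\bP(\cV^{\oplus r+2})\times(\bP^{r+1})^n$ along the base-point loci, replaces the naive hypersurface equation $F(f)=0$ at each $p_i$ by a ``virtual proper transform'' $J^{(e)}_F$ obtained by dividing the order-$k$ vanishing condition by the $(e+1-k)$-th power of the exceptional divisor (this is where $(e-1)!$ appears, via $\prod_{k=1}^{e}((k-1)H+(e+1-k)H_i)$), and then imposes the remaining global equation $F(f)\in H^0(\cL^{\otimes e}(-e\sum p_i))$. After this modification, a point of the resulting scheme $T$ with a simple base-point at $p_i$ corresponds to a stable map with a rational tail mapping to a \emph{line in $X$}, and one can then run a boundary-style dimension estimate (your claim that ``no boundary analysis is needed'' is not quite right: Proposition~\ref{T_boundary} is exactly such an analysis, and requires $r>(e+1)(e-2)$, not $e\le\frac{r+3}{2}$) to show these do not occur for general data. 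The paper also replaces $x_i$ by a general line $L_i\ni x_i$, introducing a spurious factor of $e^n$ to be divided out at the end. None of these steps is cosmetic; without them your Chern-class integral computes the wrong number.
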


Partial results in this direction have been given by the author with Pandharipande:

\begin{thm}\label{lian_pand}
Conjecture \ref{enum_conj} holds for (among other examples):
\begin{itemize}
\item\cite[Theorem 10]{lp} homogeneous varieties $X=G/P$,
\item\cite[Theorem 11]{lp} hypersurfaces $X_e\subset\bP^{r+1}$ of degree $e$, where $r>(e+1)(e-2)$.
\end{itemize}
\end{thm}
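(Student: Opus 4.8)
The plan is to prove that $\vTev^X_{g,\beta,n} = \Tev^X_{g,\beta,n}$ for the two listed families and $\int_\beta c_1(T_X)$ large by establishing two things. First, that every component of $\cM_{g,n}(X,\beta)$ dominating $\cM_{g,n}\times X^n$ is generically smooth of the expected dimension, so that $\Tev^X_{g,\beta,n}$ is well defined (the parenthetical assertion of Conjecture \ref{enum_conj}). Second, that over a general point of $\cM_{g,n}\times X^n$ the fiber of $\overline{\tau}$ is finite, contained in the interior $\cM_{g,n}(X,\beta)$, and consists of unobstructed maps, so that $\overline{\tau}_*[\barM_{g,n}(X,\beta)]^{\vir}$ restricted near such a point is the honest reduced point-count. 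Given Theorem \ref{virtev_formula}, these two facts together force the desired identity.

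The organizing device is a degeneration of the fixed general pointed curve, which simultaneously carries out the enumeration inductively in $g$ and $n$ and exposes the boundary geometry that must be controlled. Degenerating a non-separating node replaces $(C,p_1,\ldots,p_n)$ of genus $g$ by a genus-$(g-1)$ curve with two extra marked points that are then glued; on the virtual side this is exactly the insertion of the quantum Euler class $\EE$ in Theorem \ref{virtev_formula}, while degenerating $\bP^1$ to a chain of rational curves accounts for the $\PP^{\star n}$ factor. Since $\overline{\tau}$ is a morphism of proper Deligne--Mumford stacks and the virtual class is deformation-invariant, $\vTev^X_{g,\beta,n}$ is unchanged. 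One then wants the \emph{geometric} count to satisfy the same recursion, i.e.\ that under each degeneration the limiting stable maps over a general point split into the expected genus-$0$ and lower-genus pieces, with no map acquiring a contracted component or a rational component absorbing part of $\beta$, and with all gluing and evaluation maps transverse. The base cases are genus $0$ with two or three special points, where one must show the relevant spaces of rational curves are irreducible of expected dimension with dominant evaluation maps and irreducible general fibers.

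For $X=G/P$ the essential input is homogeneity: after translating the point conditions by a general element of $G$, transversality of all incidence and gluing conditions follows from Kleiman's theorem, and the spaces of rational curves of a fixed class, with their evaluation maps, are known to be irreducible of expected dimension with irreducible general fibers once that class is sufficiently positive; these feed both the recursion and the base cases. For a hypersurface $X=X_e\subset\bP^{r+1}$ there is no transitive group action, so instead one parametrizes a map $C\to X_e$ by its image in $\bP^{r+1}$ — a degree-$d$ line bundle on the domain together with a section of its $(r+2)$-fold direct sum — subject to the closed condition that the degree-$e$ defining form vanish along the image, and performs a direct dimension count for the incidence, node, and hypersurface conditions. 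The hypothesis $r>(e+1)(e-2)$ is precisely what makes this count go through: it forces the loci where these conditions degenerate — a rational tail carrying a marked point but constrained to lie on $X_e$, a limit map whose restriction to a subcurve is too special relative to $N_{X_e/\bP^{r+1}}$, or a component collapsing onto a general point — to have positive codimension, hence to miss a general fiber of $\overline{\tau}$; the same bound controls $H^1$ of the pullback of $T_{X_e}$ on the curves that do occur, giving generic smoothness. (One also checks the count is insensitive to which smooth degree-$e$ hypersurface is used, the enumerated maps having general image in $\bP^{r+1}$.)

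I expect the main obstacle to be the \textbf{uniform control of the boundary} of $\barM_{g,n}(X,\beta)$: as $\beta$ grows there are unboundedly many boundary strata, and one must bound, all at once, the dimension of the locus of stable maps with reducible domain that still lie over a neighborhood of a general point of $\cM_{g,n}\times X^n$, showing this locus is empty — equivalently, ruling out every way of letting part of the degree escape onto rational components while the remaining incidence conditions and the requirement that the domain stabilize to $C$ are still satisfied. This asymptotic dimension estimate, together with the generic smoothness and unobstructedness of the interior dominating components (for hypersurfaces, governed by the positivity of $N_{X_e/\bP^{r+1}}$ along the image curve), is the heart of the argument and is exactly where the hypotheses ``$\beta$ sufficiently positive'' and ``$r>(e+1)(e-2)$'' are consumed.
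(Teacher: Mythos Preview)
This theorem is not proved in the present paper; it is quoted from \cite{lp} (their Theorems 10 and 11), and the paper's own contribution is the alternative proof of the sharper Theorem \ref{main_thm_refined} for hypersurfaces via the blowup construction of \S\ref{blowup_section}--\ref{setup_section}. So there is no proof here to compare your proposal against directly.

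That said, your outline diverges from what \cite{lp} evidently does. Judging from the results of \cite{lp} that this paper invokes in its transversality analysis --- \cite[Propositions 13, 14, 16, 22]{lp} and the $h^1$ estimate behind Lemma \ref{h1_bound} --- the argument there works with a \emph{fixed} general curve $(C,p_1,\ldots,p_n)$ and bounds directly the dimensions of the boundary strata $\cM_\Gamma\subset\barM_{g,n}(X,\beta)$ lying over a general point of $\cM_{g,n}\times X^n$: one compares the virtual-dimension deficit of each stratum against the excess $h^1(D_{\spine},\overline{f}^*T_X)$, and the inequality $r>(e+1)(e-2)$ is precisely what makes this estimate close (Proposition \ref{T_boundary} reproduces the shape of the computation). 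There is no degeneration of $C$ and no induction on $(g,n)$. Your degeneration-and-recursion framework is not wrong in principle, but the step you phrase as ``the limiting stable maps split into the expected pieces, with no contracted or extra rational components'' \emph{is} the uniform boundary control you flag as the main obstacle --- and once that control is established for a given $(g,n,\beta)$, it already proves $\vTev=\Tev$ directly for the fixed curve, so the degeneration adds a layer without buying anything.
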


Combining the enumerativity for hypersurfaces in Theorem \ref{lian_pand} with the virtual calculations of Theorem \ref{virtualtev_hypersurface} yields an explicit \textit{asymptotic} calculation of geometric Tevelev degrees of hypersurfaces of low degree, that is, when the degree $d$ of $f:C\to X$ is sufficiently large. We state the conclusion for completeness.

\begin{thm}\label{main_thm}
    Let $X_e\subset \mathbb{P}^{r+1}$ be a smooth hypersurface of degree
    $e$ and dimension $r$. Suppose that $e\ge3$ and $r>(e+1)(e-2)$. Then
    \begin{equation*}
    \Tev^{X_e}_{g,d,n} =  ((e-1)!)^{n} \cdot (r+2-e)^g\cdot  e^{(d-n)e-g+1}
    \end{equation*}
    \emph{for all $d$ sufficiently large (compared to $g,e,r$).}
%
\end{thm}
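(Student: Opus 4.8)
**

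The plan is to establish the formula in Theorem~\ref{main_thm} by a direct projective-geometric count, bypassing both the Gromov–Witten virtual calculation of Theorem~\ref{virtualtev_hypersurface} and the boundary analysis of Theorem~\ref{lian_pand}. The key idea is that for $d$ large, a general map $f\colon C\to X_e$ of degree $d$ with $f(p_i)=x_i$ can be analyzed through the composition $C\xrightarrow{f} X_e\hookrightarrow \bP^{r+1}$, which is a nondegenerate map of degree $d$ from $C$ to $\bP^{r+1}$ whose image happens to lie on the fixed hypersurface $X_e$. Thus I would first recall the count of maps $C\to\bP^{r+1}$ of degree $d$ with $n$ general point conditions, i.e. the geometric Tevelev degree $\Tev^{\bP^{r+1}}_{g,d,n'}$ for the appropriate $n'$, which equals $(r+2)^g$ asymptotically; more precisely, I want the count of maps $C \to \bP^{r+1}$ through $n$ general points \emph{with the remaining freedom used up by incidence to $X_e$}. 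The correct bookkeeping is: a map $f\colon C\to\bP^{r+1}$ of degree $d$ is given by a line bundle $L$ of degree $d$ on $C$ together with a basepoint-free subspace $V\subseteq H^0(C,L)$ of dimension $r+2$; the locus where $f(C)\subseteq X_e$ is cut out by the condition that the image of the $e$-th symmetric power map $\Sym^e V\to H^0(C,L^{\otimes e})$ lands in the hyperplane defined by the equation of $X_e$ — a single linear condition on $C$-sections, which by Riemann–Roch (for $d$ large, $L^{\otimes e}$ nonspecial) amounts to $h^0(C,L^{\otimes e}) = ed-g+1$ linear conditions on the coefficients.

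The key steps, in order: (1) Set up the parameter space: fix $(C,p_1,\dots,p_n)$ general and $(x_1,\dots,x_n)$ general in $\bP^{r+1}$ (which we may take to lie on $X_e$ after the fact, or rather: fix $X_e$ general, then $x_i\in X_e$ general). Parametrize maps to $\bP^{r+1}$ by the Brill–Noether-type variety of pairs $(L,V)$ with $\dim V = r+2$, which for $d\gg 0$ is irreducible of the expected dimension and dominates $\cM_{g,n}\times(\bP^{r+1})^n$ with degree $(r+2)^g$ by the $\bP^r$-result (applied with $r\mapsto r+1$); (2) impose the point conditions $f(p_i)=x_i$: this is codimension $r+1$ per point but we instead want $f(p_i)=x_i$ for $x_i\in X_e$, so we should count maps to $\bP^{r+1}$ through points $x_i\in X_e$ \emph{and} with image in $X_e$; (3) the condition ``image in $X_e$'' is the vanishing of the composite $\Sym^e V\to H^0(L^{\otimes e})\xrightarrow{F} \bC$ where $F$ is (the functional dual to) the degree-$e$ form; realize this as the vanishing of a section of a vector bundle of rank $h^0(L^{\otimes e})$ on the parameter space, and compute the resulting count via a Chern class / Schubert calculus computation, which should telescope; (4) account for the point conditions interacting with the hypersurface condition: near each $p_i$, fixing $f(p_i)=x_i\in X_e$ automatically satisfies $e$ of the linear conditions (those involving the $e$-jet at $p_i$ along a curve forced through $x_i$), so the net count of independent conditions is $ed-g+1-$(corrections), and the combinatorial factor $((e-1)!)^n$ arises from the local contribution of the $(e-1)$-jets at each $p_i$ that are constrained; (5) assemble: $(r+2)^g$ from the ambient count, divided/multiplied by the hypersurface-incidence factor $e^{(d-n)e-g+1}$ and the local factors $((e-1)!)^n$, and observe $(r+2)$ is replaced by $(r+2-e)$ because $e$ of the $r+2$ ``branching'' directions at each node of a degeneration are killed by the tangency to $X_e$ — this is where the shift $r+2\rightsquigarrow r+2-e$ in the base of the exponential comes from.

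I expect the main obstacle to be Step~(4): carefully quantifying how the $n$ point conditions $f(p_i)=x_i$ interact with the global hypersurface-incidence condition, i.e. showing that exactly the ``right'' number of the $h^0(C,L^{\otimes e})$ linear constraints become automatic once the map is forced through $x_i\in X_e$, and extracting the precise local multiplicity $(e-1)!$ at each $p_i$. Concretely, one must analyze the jet of $F\circ f$ at $p_i$: since $f(p_i)=x_i\in X_e$ the value vanishes, and one needs to understand the osculating behavior — this is essentially a local computation with the Taylor expansion of the defining equation of $X_e$ along the image curve, and the combinatorics of which symmetric functions of the jet coordinates appear will produce the factorials. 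A secondary difficulty is verifying that for $d$ large the relevant parameter space (pairs $(L,V)$ with the incidence conditions imposed) is reduced of the expected dimension with no excess contributions from the boundary of the space of maps or from special $L$ — here I would invoke a dimension count showing that degenerate loci (base points, non-nondegenerate maps, reducible images) have strictly smaller dimension once $d\gg g,e,r$, mirroring the argument that makes $\Tev^{\bP^r}$ enumerative in the stable range. Granting these, the final formula follows by multiplying the ambient count $(r+2)^g$ — corrected to $(r+2-e)^g$ by the tangency constraint at degenerations — by $e^{(d-n)e-g+1}$ and $((e-1)!)^n$, matching Theorem~\ref{virtualtev_hypersurface}.
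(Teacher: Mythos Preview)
Your overall strategy---parametrize maps $C\to\bP^{r+1}$ by sections of line bundles over $\Jac^d(C)$, then cut down by the vanishing of $F(f_0,\ldots,f_{r+1})$ as a section of a rank $ed-g+1$ bundle---matches the paper's. But what you label a ``secondary difficulty'' is in fact the central obstruction, and your proposed resolution is wrong.

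You write that for $d\gg g,e,r$ one can show ``degenerate loci (base points, \ldots) have strictly smaller dimension.'' This is false for $e\ge 3$: since \eqref{vir_dims_match_hypersurface} gives $n\sim\frac{r+2-e}{r}d<d$, for all large $d$ there are tuples $[f_0:\cdots:f_{r+1}]$ where every $f_j$ vanishes at all of $p_1,\ldots,p_n$ and the residual constant map lands in $X_e$. These form a positive-dimensional family sitting inside the naive intersection, and they do not disappear as $d\to\infty$. The paper resolves this by passing to the blowup $Y\subset\bP(\cV^{\oplus r+2})\times(\bP^{r+1})^n$ along the base-point loci and, crucially, by replacing the naive equations $s_F^{(k)}$ (the $k$-th Taylor coefficient of $F\circ f$ at $p_i$) with the \emph{divided} sections $\wt{s}_F^{(k)}=s_F^{(k)}/f_{\ell,0}^{e+1-k}$, i.e.\ a virtual proper transform. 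This division is exactly what converts the base-point condition into the geometric condition ``$\wt f(p_i)$ and $x'_i$ lie on a line in $X_e$'' (Proposition~\ref{T_settheoretic}), after which the transversality argument becomes a dimension count on strata of stable maps. Your proposal has no analogue of this step, so your Step~(3) would not intersect properly and your Step~(4) has nothing correct to compute.

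Relatedly, the origins you suggest for the numerical factors are not right. The factor $((e-1)!)^n$ does not come from a local jet computation per se: it is $(e!)^n/e^n$, where $e!$ per marked point arises from the product $\prod_{k=1}^{e}((k-1)H+(e+1-k)H_i)$ of Chern classes of the line bundles $\cO(e)(-(e+1-k)E_i)$ on the blowup, and the division by $e^n$ comes from replacing the point condition $f(p_i)=x_i$ by $f(p_i)\in L_i$ for a general line $L_i\ni x_i$ (needed because $x_i$ is general in $X_e$, not in $\bP^{r+1}$). The factor $(r+2-e)^g$ does not come from ``branching directions at nodes of a degeneration''; it is the result of a Chern/Segre class computation on $\Jac^d(C)$ combining $c_m(\pi_*\cP^{\otimes e}(-\sum ep_i))=(-e^2)^m\theta^m/m!$ with $s_{g-m}(\cV^{\oplus r+2})=(r+2)^{g-m}\theta^{g-m}/(g-m)!$ and summing via the binomial theorem.
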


\subsection{Geometric Tevelev degrees of hypersurfaces via projective geometry}

Given the simplicity of the formulas of Theorem \ref{main_thm}, the authors of \cite{bp} speculate that a proof via projective geometry along the lines of the asymptotic calculation of \cite{lp} for $X=\bP^r$ should be possible (see also \cite[Question 28]{lp}). In this paper, we give such a proof.

We first outline the strategy; assume first for simplicity that $g=0$, where the key idea already appears. Let $V=H^0(\bP^1,\cO(d))$ and let $\bP(V^{r+2})\cong\bP^{(r+2)(d+1)-1}$ denote the space of $(r+2)$-tuples $[f_0:\cdots:f_{r+1}]$, where $f_i\in H^0(\bP^1,\cO(d))$, the $f_j$ taken up to simultaneous scaling. A generic choice of $f_j$ defines a map $f:\bP^1\to\bP^{r+1}$ of degree $d$, but the degree of this map drops when the $f_j$ share common factors.

Now, fix general points $p_1,\ldots,p_n\in\bP^1$ and $x_1,\ldots,x_n\in X_e=Z(F)\subset \bP^{r+1}$. The condition that either $f(p_i)=x_i$ or all of $f_0,\ldots,f_{r+1}$ vanish at $p$ defines a linear subspace $\Inc(p_i,x_i)\subset \bP(V^{r+2})$ of codimension $r+1$. We wish to intersect the $\Inc(p_i,x_i)$ with the locus $J'_F$ consisting of $f$ with image in $X_e$, that is, with $F(f_0,\ldots,f_{r+1})=0$. The subscheme $J'_F$ is an intersection of $de+1$ hypersurfaces of degree $e$ in $\bP(V^{r+2})$.

One quickly sees that the loci $\Inc(p_i,x_i)$ and $J'_F$ do not intersect in the expected dimension, owing to the fact that the $x_i$ are general points in $X_e$, not in $\bP^{r+1}$. This is easily circumvented by imposing instead the condition that $f(p_i)\in L_i$, where $L_i\subset\bP^{r+1}$ is a fixed general line through $x_i$; the corresponding subscheme $\Inc(p_i,L_i)\subset\bP(V^{r+2})$ now has the correct expected codimension of $r$ upon restriction to $J'_F$, but introduces an extra factor of $e$ in the intersection number, coming from the $e-1$ other points of $L_i\cap X$.

There is, however, a more serious problem: typically, the intersection of the $\Inc(p_i,L_i)$ and $J'_F$ contains positive-dimensional excess loci corresponding to degenerate $f$. For example, if $d\ge n$, we may take $f_0,\ldots,f_{r+1}$ to be constant multiples of each other vanishing on all of $p_1,\ldots,p_n$, such that the image of the constant map $f=[f_0:\cdots:f_{r+1}]$ lies in $X_e$. As \eqref{vir_dims_match} implies that
\begin{equation*}
n\sim\frac{r+2-e}{r}\cdot d,
\end{equation*}
we have that if $e\ge 3$, even when $d$ becomes sufficiently large, such degenerate maps $f$ always exist.

We resolve the issue by blowing up the linear subspaces on $\bP(V^{r+2})$ where all of $f_0,\ldots,f_{r+1}$ vanish at one of the $p_i$. More explicitly, let $$Y\subset \bP(V^{r+2})\times(\bP^{r+1})^{n}$$
be the incidence correspondence (cut out by bilinear equations) consisting of $(f,x'_1,\ldots,x'_n)$ for which either $f_0,\ldots,f_{r+1}$ simultaneously vanish on $p_i$ or $f(p_i)=x'_i$. Then, we cut out (in a precise way) a subscheme $J_F\subset Y$ approximating the \emph{proper transform} $J'_F\subset\bP(V^{r+2})$.

The key observation is that, on the locus where $f$ has a simple base-point $p_i$, if $\wt{f}$ denotes the map of degree $d-1$ obtained by dividing by the common factor, then $(f,x'_1,\ldots,x'_n)\in J_F$ if and only if the points $\wt{x}_i:=\wt{f}(p_i)$ and $x'_i$ lie on a line contained in $X_e$ (Proposition \ref{T_settheoretic}).

This new condition may alternatively be regarded as the requirement that there exist a stable map of degree $d$ to $X_e$ defined by the degree $d-1$ map $\wt{f}:\bP^1\to X_e$ and a rational tail mapping isomorphically to the line on $X_e$ through $\wt{x}_i$ and $x'_i$. In light of this comparison, we employ a similar analysis as in \cite{lp} to argue that, under the conditions of Theorem \ref{main_thm}, the intersection of $J_F$ and the pullback to $Y$ of $$\prod_{i=1}^{n}L_i\subset(\bP^{r+1})^n$$is a reduced, zero-dimensional subscheme supported on the locus of $f$ which are well-defined at all of the $p_i$. In particular, the degree of this zero-dimensional subscheme is equal to $e^n\cdot\Tev^{X_e}_{0,d,n}$.

When $g$ is arbitrary, we relativize this construction over the Jacobian $\Jac^d(C)$ (following the strategy of \cite{BDW}) to prove Theorem \ref{main_thm}. 

We will record, along the way of the proof, a new effective bound on $d$, above which the virtual Tevelev degrees are enumerative. We now state the more refined result.

\begin{thm}\label{main_thm_refined}
    Let $X_e\subset \mathbb{P}^{r+1}$ be a smooth hypersurface of degree
    $e$ and dimension $r$. Suppose that $e\ge3$ and $r>(e+1)(e-2)$.
    
    Then,
    \begin{equation*}
    \Tev^{X_e}_{g,d,n} =  ((e-1)!)^{n} \cdot (r+2-e)^g\cdot  e^{(d-n)e-g+1}
    \end{equation*}
    for \emph{all} $d$ if $g=0$, and whenever
    \begin{equation*}
d>\frac{r((3g-2)(1+e)+1+g(r+2))}{r-(e+1)(e-2)}
\end{equation*}
if $g>0$.
\end{thm}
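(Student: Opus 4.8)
The plan is to follow the projective-geometry strategy outlined in the introduction, reducing the computation of $\Tev^{X_e}_{g,d,n}$ to a zero-dimensional intersection count inside a blown-up (or rather, bilinearly-modified) parameter space $Y$. First I would set up, for $g=0$, the space $\bP(V^{r+2})$ with $V=H^0(\bP^1,\cO(d))$, define the incidence correspondence $Y\subset\bP(V^{r+2})\times(\bP^{r+1})^n$ cut out by the bilinear "vanish or pass through" equations at $p_1,\dots,p_n$, and identify the subscheme $J_F\subset Y$ that approximates the proper transform of $J'_F=\{F(f_0,\dots,f_{r+1})=0\}$. The point is to give the right scheme-theoretic definition of $J_F$: away from the exceptional loci it agrees with $J'_F$, and on the locus where $f$ acquires a simple base point $p_i$, membership in $J_F$ should be governed by the condition of Proposition~\ref{T_settheoretic}, namely that $\wt{f}(p_i)$ and $x'_i$ lie on a line contained in $X_e$.

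Next I would compute the expected intersection number: intersecting $J_F$ with the pullback of $\prod_{i=1}^n L_i\subset(\bP^{r+1})^n$, where $L_i$ is a general line through $x_i$. Using the bilinear/projective-bundle structure of $Y$ and the fact that $J'_F$ is a complete intersection of $de+1$ (for general $g$, $de-g+1$) hypersurfaces of degree $e$, this is a Chern-class computation on a projective bundle over $(\bP^{r+1})^n$ — or, relativizing over $\Jac^d(C)$ for $g>0$ following \cite{BDW}, over $\Jac^d(C)\times(\bP^{r+1})^n$. One checks that this number equals $((e-1)!)^n\cdot(r+2-e)^g\cdot e^{(d-n)e-g+1}$ multiplied by the correction factor $e^{-n}$ that is cancelled by the $e$ spurious points of each $L_i\cap X_e$; after dividing by $e^n$ one recovers exactly the claimed formula, provided the intersection is reduced, zero-dimensional, and supported on honest maps.

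The crux — and the main obstacle — is the dimension and transversality analysis: showing that under the hypotheses $e\ge3$, $r>(e+1)(e-2)$, and $d$ above the stated bound, the intersection $J_F\cap\prod_i L_i$ is genuinely zero-dimensional, reduced, and entirely supported on the locus of $f$ that are base-point-free at every $p_i$ (so that they correspond to actual degree-$d$ maps $C\to X_e$ with $f(p_i)\in L_i$), with no excess contribution from the exceptional loci. This is where the condition $r>(e+1)(e-2)$ enters: by the reinterpretation in terms of stable maps with a rational tail covering a line on $X_e$, an excess locus would force, at some $p_i$, a line in $X_e$ through two general-position points, or more degenerate configurations (multiple base points, contracted components, higher-order vanishing), and a dimension count — essentially the one in \cite{lp}, adapted to the present setup — shows these cannot occur generically once $r$ and $d$ are large enough. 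I would carry out this count carefully for each type of potential boundary/excess stratum: (i) a single simple base point, handled by Proposition~\ref{T_settheoretic} and the geometry of lines on $X_e$; (ii) multiple or higher-multiplicity base points; and (iii) degenerations over the boundary of $\Jac^d(C)$ or involving reducible domains when $g>0$. Summing the dimension losses against the number $n\sim\frac{r+2-e}{r}d$ of point conditions yields exactly the inequality $d>\frac{r((3g-2)(1+e)+1+g(r+2))}{r-(e+1)(e-2)}$; the denominator $r-(e+1)(e-2)$ is where the Fano-type hypothesis on $r$ is forced, and the numerator collects the contributions of the $3g-2$ or so potential degenerate components each of bounded complexity.

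Finally, for $g=0$ one observes that the bound degenerates — the genus-dependent terms vanish — so the argument goes through for all $d$ with $2g-2+n>0$, i.e.\ all $d\ge1$; this explains the bifurcation in the statement of Theorem~\ref{main_thm_refined}. Throughout, I would lean on Theorem~\ref{virtualtev_hypersurface} and Theorem~\ref{lian_pand} only as consistency checks: the projective-geometric count is self-contained and, where it overlaps the range of Theorem~\ref{main_thm}, must reproduce the same number, which serves as a useful sanity check on the Chern-class bookkeeping.
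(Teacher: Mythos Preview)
Your plan is essentially the paper's own proof: the same incidence correspondence $Y\subset\bP(\cV^{\oplus r+2})\times(\bP^{r+1})^n$, the same ``virtual proper transform'' $J_F$ (whose precise construction in the paper is via successively divided sections $\wt{s}^{(k)}_F$ of explicit line bundles, yielding the excess class computed in Lemma~\ref{exc_class_virproptrans}), the same Chern-class calculation over $\Jac^d(C)$ giving $(e!)^n(r+2-e)^g e^{(d-n)e-g+1}$, and the same transversality analysis via the stable-map reinterpretation and an $h^1$ bound that produces exactly the stated threshold in $d$. One small slip: $\Jac^d(C)$ has no boundary, and reducible domains enter only through the stable-map interpretation of base-pointed $f$, not as degenerations in the parameter space itself.
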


%

%

%

That no condition on $d$ is necessary when $g=0$ is also \cite[Corollary 34]{lp}.

In fact, for a \textit{general} hypersurface $X_e$ of degree $e\le r-1$, then the irreducibility of the full Kontsevich space $\overline{M}_{0,n}(X_e,d)$ (due to Riedl-Yang \cite[Theorem 3.3]{ry}, and also Harris-Roth-Starr \cite{hrs}, Beheshti-Kumar \cite{bk} in smaller ranges for $e$) implies that the virtual Tevelev degree $\vTev^{X_e}_{0,d,n}$ is enumerative. In particular, if $3\le e\le \frac{e+3}{2}$ and $X_e$ is general, then the formula $\Tev^{X_e}_{0,d,n} =  ((e-1)!)^{n} \cdot  e^{(d-n)e+1}$ holds for a fixed pair of integers $(d,n)$ satisfying \eqref{vir_dims_match_hypersurface}, see Remark \ref{general_genus0}. (To guarantee the formula to hold on $X_e$ for \textit{any} $(d,n)$, one should take $X_e$ to be \textit{very} general.) We thank Eric Riedl for pointing this out.

We also note here that, while we will not need the arguments of \cite{lp} in full generality, the transversality analysis of our new construction confronts the same issues of controlling excess dimensions of curves on hypersurfaces, and in particular, produces the same bound $r>(e+1)(e-2)$ for (asymptotic) enumerativity. Thus, our proof of Theorem \ref{main_thm_refined}, which was previously established through the virtual calculation of \cite{bp} and then the enumerativity analysis of \cite{lp}, may be viewed in comparison in the following light. First, we give a more elementary path to the counts obtained as virtual degrees in \cite{bp}. Then, we apply special cases of the arguments of \cite{lp} to show that these counts are equal to the desired geometric Tevelev degrees.

\subsection{Further directions}\label{further}

We make several remarks concerning extensions of this work.

\begin{enumerate}
\item We expect the method to go through for complete intersections of low degree, see also \cite[\S 5]{bp} and \cite[Proposition 26]{lp}.
\item We will see that our calculation uses the fact that the degree $e$ of the hypersurface $X_e$ is at least 3 in an essential way, see step (3) of \S\ref{setup_section}. However, we believe that minor modifications should allow one to handle the exceptional case $e=2$ similarly.
\item Our transversality analysis in \S\ref{transversality_section}, which makes no improvements on the method of \cite{lp}, seems to go through highly suboptimal bounds. We expect more refined estimates of excess dimensions of families of stable maps on hypersurfaces to allow for stronger results.
\item While our calculation (provably) produces a geometrically enumerative count only under the conditions given in Theorem \ref{main_thm_refined}, our construction produces a cycle class of homological dimension 0 and degree equal to the corresponding virtual Tevelev degree under the much milder bounds $r\ge 2e-3$ and $d\ge2g$. Thus, it seems plausible that excess contributions in our geometric setup can be identified with boundary contributions to the virtual Tevelev degrees in greater generality.
\item When $e>\frac{r+3}{2}$, our method fails because because the construction imposes no non-trivial condition on $f$ whenever $f$ has a \emph{double} base-point at $x_i$, which allows $f$ to vanish to order 2 at \textit{all} $x_i$ whenever $d\ge 2n$. However, working on further (iterated) blowups of $\bP(V^{r+2})$ should allow one to extend the calculation to larger degrees $e$ and predict formulas for the corresponding virtual Tevelev degrees, see \cite{cela}.
\item\label{insertion_intro} One can consider more general insertions at the marked points of $X$, which will be carried out in a later version of \cite{bp}, see also \cite{buch_zoom}. More precisely, for $i=1,2,\ldots,n$, fix a cohomology class $\Omega_i\in H^{m_i}(X)$. Then, define ${\langle \Omega_1,\ldots,\Omega_n\rangle}^{X}_{g,n}$ by the integral
\begin{equation*}
\int_{\barM_{g,n}\times X^n}\overline{\tau}_{*}([\barM_{g,n}(X,\beta)]^{\vir})\cdot([(C,p_1,\ldots,p_n)]\otimes\Omega_1\otimes\cdots\otimes\Omega_n)\in\bQ,
\end{equation*}
where $[(C,p_1,\ldots,p_n)]\in A_0(\barM_{g,n})$ is the class of any (general) curve and 
\begin{equation*}
\overline{\tau}:\barM_{g,n}(X,\beta)\to\barM_{g,n}\times X^n
\end{equation*}
is the forgetful morphism as before. The case $\Omega_i=[x_i]\in H^{2r}(X)$ corresponds to the virtual Tevelev degree $\vTev^X_{g,n,\beta}$ defined earlier. Note that ${\langle \Omega_1,\ldots,\Omega_n\rangle}^{X}_{g,n}$ vanishes unless
\begin{equation*}
\int_\beta c_1(T_X)=r(g-1)+\frac{1}{2}\sum_{i=1}^{n}m_i.
\end{equation*}

When $X=X_e\subset\bP^{r+1}$ is a smooth hypersurface and the $\Omega_i$ are pullbacks of powers of the hyperplane class on $\bP^{r+1}$, our geometric calculation produces more generally a zero-dimensional cycle of degree equal (up to a factor of $e^n$, as in the case $\Omega_i=[x_i]$) to ${\langle \Omega_1,\ldots,\Omega_n\rangle}^{X}_{g,n}$, see \S\ref{insertions}. We expect again these counts to be enumerative for fixed $X,g$ and $d$ large, but we do not consider this aspect of the question here.
\end{enumerate}

\subsection{Acknowledgments}

We thank Anders Buch, Alessio Cela, Gavril Farkas, Maria Gillespie, Rahul Pandharipande, Eric Riedl, Johannes Schmitt, and Jason Starr for discussions about Tevelev degrees in various contexts. We also thank the referee for a thorough reading and for pointing out a gap in the main construction in an earlier version, among numerous other helpful comments. This project was completed with the support of an NSF postdoctoral fellowship, grant DMS-2001976, and the MATH+ Incubator grant ``Tevelev Degrees.'' 
 
\section{Proof of Theorem \ref{main_thm_refined}}

\subsection{Preliminaries}\label{prelim_section}

Let $X=Z(F)\subset\bP^{r+1}$, where $F\in H^0(\bP^{r+1},\cO(e))$, be a smooth hypersurface of degree $e\ge3$, and let $x_1,\ldots,x_n\in X$ be general points. (We will drop the subscript $e$ from the introduction denoting the degree of $X$ in the rest of the paper.) Fix a general pointed curve $(C,p_1,\ldots,p_n)$ and general lines $L_1,\ldots,L_n\subset\bP^{r+1}$ such that $L_i\ni x_i$ for each $i$. In particular, we assume that the $L_i$ are pairwise disjoint and all intersect $X$ transversely.

We assume throughout that $d\ge2g$. It is straightforward to check that this is implied by the lower bound on $d$ imposed by Theorem \ref{main_thm_refined}. Let $\Jac^d(C)$ be the Jacobian of degree $d$ line bundles on $C$, let $\pi:C\times\Jac^d(C)\to\Jac^d(C)$ be the projection, and let $\cP$ be the Poincar\'{e} line bundle on $C\times\Jac^d(C)$. Now, define the vector bundle $\cV=\pi_{*}(\cP)$ of rank $d-g+1$ on $\Jac^d(C)$. 

Let $\nu:\bP(\cV^{\oplus r+2})\to \Jac^d(C)$ be the projective bundle whose points parametrize $(r+2)$-tuples $f=[f_0:\cdots:f_{r+1}]$ of sections of $H^0(C,\cL)$ up to simultaneous scaling. (We will generally drop the reference to $\cL$ in referring to a point of $\bP(\cV^{\oplus r+2})$.) Given such an $f=[f_0:\cdots:f_{r+1}]$, we say that a point $p\in C$ is a \textit{base-point} for $f$ if $f_0,\ldots,f_{r+1}$ all vanish at $p$, a \textit{double base-point} if they do so to order \textit{at least} 2, and a \textit{simple base-point} if $p$ is a base-point that is not a double base-point.

We denote by $\widetilde{f}:C\to\bP^{r+1}$ the morphism of degree at most $d$ obtained from $f$ by twisting down all base-points of $f$, and (abusively) by $f:C\dashrightarrow\bP^{r+1}$ to be the \textit{rational} map given simply by $f=[f_0:\cdots:f_{r+1}]$, which is undefined at any base-points of $f$.


\subsection{Blowing up the locus of maps with base-points}\label{blowup_section}
We work on the ambient space $\bP(\cV^{\oplus r+2})\times\bP^{r+1}$. Fix any point $p\in C$, and let $Y_p\subset \bP(\cV^{\oplus r+2})\times\bP^{r+1}$ be the locus of $(f,y)$ where either $p$ is a base-point of $f$ or $f(p)=y$. Alternatively, we have a linear projection map $\ev:\bP(\cV^{\oplus r+2})\dashrightarrow\bP^{r+1}$ sending $f$ to $f(p)$, defined exactly where $p$ is not a base-point of $f$. Then, under
\begin{equation*}
(\ev,\id): \bP(\cV^{\oplus r+2})\times \bP^{r+1}\dashrightarrow \bP^{r+1}\times\bP^{r+1},
\end{equation*}
we have that $Y_p$ is the closure of the pre-image of the diagonal $\Delta\subset \bP^{r+1}\times\bP^{r+1}$.

Upon composing with the projection to $\bP(\cV^{\oplus r+2})$, we may identify $b:Y_p\to \bP(\cV^{\oplus r+2})$ with the blowup of $\bP(\cV^{\oplus r+2})$ at the locus where $f_0,\ldots,f_{r+1}$ all vanish at $p$. Because we have assumed that $d\ge 2g$, this locus is the projective sub-bundle $\bP(\cV(-p)^{\oplus r+2})\subset\bP(\cV^{\oplus r+2})$ of $(r+2)$-tuples of sections of $H^0(C,\cL(-p))$ over each point $[\cL]$ of $\Pic^d(C)$.

The exceptional divisor $E\subset Y_p$ is equal to $\bP(\cV(-p)^{\oplus r+2})\times\bP^{r+1}$. Let $H,H_p\in\Pic(Y_p)$ denote the pullbacks of the relative hyperplane class on $\bP(\cV^{\oplus r+2})$ and the hyperplane class of $\bP^{r+1}$ to $Y_p$, respectively.

\begin{lem}\label{exc_class}
We have $[E]=H-H_p$ in $\Pic(Y_p)$.
\end{lem}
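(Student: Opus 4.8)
The plan is to identify $Y_p$ with a blowup and then compute the exceptional class by restricting to a suitable curve, or by a direct divisor-class computation on the projective bundle. Since $b:Y_p\to\bP(\cV^{\oplus r+2})$ is (as already noted in the text) the blowup along the sub-bundle $Z:=\bP(\cV(-p)^{\oplus r+2})$, the Picard group of $Y_p$ away from pullbacks from $\Jac^d(C)$ is generated by $H=b^*(\text{rel.\ hyperplane})$ and $[E]$. The class $H_p$ (pullback of the hyperplane from the $\bP^{r+1}$-factor) is some combination; I would show $H_p = H - [E]$, equivalently $[E]=H-H_p$.

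First I would spell out the rational map $\ev:\bP(\cV^{\oplus r+2})\dashrightarrow\bP^{r+1}$ sending $f\mapsto f(p)$. Concretely, evaluation at $p$ gives a surjection of bundles $\cV^{\oplus r+2}\to (\cV/\cV(-p))^{\oplus r+2}\cong\cO_{\Jac^d(C)}^{\oplus r+2}$ (using $d\ge 2g$ so that $\cV(-p)$ is again a bundle and the quotient is a trivial rank-$(r+2)$ bundle on $\Jac^d(C)$, the fibre being the line through $p$ on $C$ — more precisely $H^0(\cL)/H^0(\cL(-p))$). This induces the linear projection $\bP(\cV^{\oplus r+2})\dashrightarrow\bP(\cO^{\oplus r+2})=\Jac^d(C)\times\bP^{r+1}$, undefined exactly along $Z=\bP(\cV(-p)^{\oplus r+2})$, and its graph closure is $Y_p$ with the two projections $b$ and the map to $\bP^{r+1}$. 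This is the standard description of the blowup of a projective bundle along a linearly embedded sub-bundle as the graph of the associated linear projection; in particular the pullback of $\cO_{\bP^{r+1}}(1)$ along the second projection is $\cO_{\bP(\cV^{\oplus r+2})}(1)$ twisted by $-E$, i.e.\ $H_p = H - [E]$. I would cite this as the bundle version of the elementary fact that blowing up a linear $\bP^k\subset\bP^N$ realises it as the graph of the projection from $\bP^k$, where the pulled-back hyperplane from the target $\bP^{N-k-1}$ equals $H-E$.

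Alternatively, and perhaps cleaner to write, I would verify $[E]=H-H_p$ by intersecting both sides with a test curve. Take a general line $\ell$ in a fibre of $\nu$, i.e.\ a pencil $\{[sf + tg]\}$ of $(r+2)$-tuples of sections of a fixed $\cL$, chosen generically so that it meets $Z$ transversely in one point (this is possible since $Z$ has codimension $r+2$ and $\ell$ is a line, so for a generic line in a generic higher-dimensional linear subspace through a point of $Z$ we get one transverse intersection — more simply, pick the pencil to lie in a generic $\bP^{r+2}$ meeting $Z$ in a point). The strict transform $\widetilde\ell\subset Y_p$ then satisfies $H\cdot\widetilde\ell = 1$, $[E]\cdot\widetilde\ell = 1$ (one transverse point of the pencil is a base-point), and $H_p\cdot\widetilde\ell$ equals the degree of the map $\widetilde\ell\to\bP^{r+1}$, $t\mapsto (sf+tg)(p)$, which is $0$ since the base-point is in the pencil (so after normalising, $f(p)$ and $g(p)$ become proportional — the image is a single point). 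Hence $([E] - H + H_p)\cdot\widetilde\ell = 1 - 1 + 0 = 0$. Running this over enough test curves — fibral lines plus one curve detecting the pullback from $\Jac^d(C)$ (where all three classes restrict trivially) — pins down the class, since $\Pic(Y_p)$ is generated by $H$, $[E]$, and $\nu^*\Pic(\Jac^d(C))$, and both sides of the claimed identity have the same (zero) component over $\Jac^d(C)$.

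The main obstacle is bookkeeping rather than conceptual: making sure that over the positive-dimensional base $\Jac^d(C)$ the identity $H_p = H - [E]$ holds exactly on the nose (no extra twist by a class pulled back from $\Jac^d(C)$) and that the quotient bundle $\cV/\cV(-p)$ really is trivial so that the second projection genuinely lands in a constant $\bP^{r+1}$ rather than a twisted projective bundle. The triviality follows from $d\ge 2g$ (so $h^0(\cL)=d-g+1$ and $h^0(\cL(-p))=d-g$ for every $\cL\in\Jac^d(C)$, making $\pi_*\cP$ and $\pi_*(\cP(-p\times\Jac))$ bundles and their quotient a line bundle on $C\times\Jac^d(C)$ supported on $\{p\}\times\Jac^d(C)$, hence the restriction of $\cP$ there, which we may absorb into the normalisation of $\cP$); and the absence of an extra twist is exactly the content of identifying $\bP^{r+1}$ (not a twisted form) as the target, which the test-curve computation above confirms since any such twist would have been detected by a curve in the base. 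I would therefore present the test-curve argument as the proof, remarking that it is the relative version of the classical linear-projection picture.
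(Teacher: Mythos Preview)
Your proposal is correct, but the paper's own proof is much shorter and more direct. The paper observes that on $Y_p$ the Cartier divisor $\{f_0(p)=0\}$ has class $H$ (it is the pullback of a relative hyperplane on $\bP(\cV^{\oplus r+2})$) and decomposes as $E+\{y_0=0\}$: away from $E$ one has $f(p)=y$, so $f_0(p)=0\iff y_0=0$, while on $E$ all $f_j(p)$ vanish, and a local check shows $E$ appears with multiplicity exactly one. Hence $H=[E]+H_p$.

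By contrast, you propose either invoking the general linear-projection description of the blowup of a projective sub-bundle, or a test-curve argument. Both work, but each is longer. In the test-curve approach you should make explicit that a second fibral line \emph{disjoint} from $Z$ is also needed (giving $H_p\cdot\ell=1$, $H\cdot\ell=1$, $[E]\cdot\ell=0$) in order to pin down both coefficients; one line through $Z$ only yields the single relation $a+b=0$ for $H_p=aH+b[E]+\nu^*\alpha$. Your careful attention to a possible twist by a class pulled back from $\Jac^d(C)$ is well-placed and in fact applies equally to the paper's one-line argument, where the identification of $\{f_0(p)=0\}$ with a divisor of class exactly $H$ (rather than $H+\nu^*c_1(\cP|_{\{p\}\times\Jac})$) implicitly uses a normalization of the Poincar\'e bundle at $p$; the paper makes this normalization explicit later in \S\ref{blowup_section} when choosing local coordinates. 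So your concern is legitimate and your resolution (absorb into the normalization of $\cP$) is exactly what the paper does.
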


\begin{proof}
As a Cartier divisor on $Y_p$, we may identify $E$ with the locus of $([f_0:\cdots:f_{r+1}],[y_0:\cdots:y_{r+1}])\in Y_p$ where $f_0(p)=0$, which has class $H$, minus the locus where $y_0=0$, which has class $H_p$.
\end{proof}

\begin{defn}
For $k=0,1,\ldots,e$, we define $I^{(k)}_{F,p}\subset \bP(\cV^{\oplus r+2})$ to be the locus of $[f_0:\cdots:f_{r+1}]$ for which $F(f_0,\ldots,f_{r+1})\in H^0(C,\cL^{\otimes e})$ vanishes to order at least $k$ at $p$.
\end{defn}

We now construct inductively a chain of subschemes
$$J^{(e)}_{F,p}\subset\cdots \subset J^{(0)}_{F,p}=Y_p$$
where $J^{(k)}_{F,p}\subset b^{-1}(I^{(k)}_{F,p})$ may be regarded as the "virtual proper transform" of $I^{(k)}_{F,p}$ under $b$. Assuming that $J^{(k-1)}_{F,p}\subset b^{-1}(I^{(k-1)}_{F,p})$, the subscheme $J^{(k)}_{F,p}\subset J^{(k-1)}_{F,p}$ is cut out by the vanishing of a section of a certain line bundle.

We first cut out a subscheme $\overline{J}^{(k)}_{F,p}\subset J^{(k-1)}_{F,p}$ by the divisorial condition that $F$ (which, by assumption, vanishes to order at least $k-1$ at $p$) vanishes to order $k$ at $p$. This amounts to the vanishing of a section
$$F^{(k)}\in \ker(H^0(C,\cL^{\otimes e}|_{kp})\to H^0(C,\cL^{\otimes e}|_{(k-1)p}))\cong\frac{H^0(C,\cL^{\otimes e}(-(k-1)p))}{H^0(C,\cL^{\otimes e}(-kp))}$$
obtained by restricting $F(f_0,\ldots,f_{r+1})\in H^0(C,\cL^{\otimes e})$. (Here, by $kp$ we mean the subscheme of $C$ of multiplicity $k$ supported at $p$, and by $\cL^{\otimes e}|_{kp}$ we mean $\cL^{\otimes e}\otimes_{\cO_C}\cO_{kp}$.)

Globally, $\overline{J}^{(k)}_{F,p}$ is the vanishing locus of a canonical section
$$s^{(k)}_F\in H^0\left(J^{(k-1)}_{F,p},b^{*}\left(\nu^{*}\pi_{*}\left(\frac{\cP^{\otimes e}(-(k-1)p)}{\cP^{\otimes e}(-kp)}\right)\otimes\cO_{\bP}(e)\right)\right).$$
Note in particular that $\deg(\cL^{\otimes e}(-kp))=ed-k\ge e(d-1)>2g-2$ by assumption, so $\cP^{\otimes e}(-(k-1)p)$ and $\cP^{\otimes e}(-kp)$ have vanishing higher cohomology on the fibers of $\pi:C\times\Jac^d(C)\to\Jac^d(C)$.
 
We now express this vanishing locus in terms of local coordinates. Let $t$ be an analytic local coordinate on $C$ near $p$, so that $\cL|_{\widehat{\cO_{C,p}}}\cong\bC[[t]]$ simultaneously for all $\cL\in\Jac^d(C)$. More precisely, $t\in \widehat{\cO_{C,p}}$ is obtained by restriction from an everywhere non-vanishing section of the trivial line bundle $\cP|_{\Jac^d(C)\times \widehat{\cO_{C,p}}}$.

Then, write
\begin{equation*}
f_j=f_{j,0}+f_{j,1}t+f_{j,2}t^2+\cdots
\end{equation*}
for the Taylor expansion of $f_j$ around $p$, so that
\begin{equation*}
F(f_0,\ldots,f_{r+1})=s^{(1)}_F+s^{(2)}_Ft^{1}+\cdots+s^{(k-1)}_Ft^{k-2}+s^{(k)}_Ft^{k-1}+\cdots.
\end{equation*}

By assumption, the equations $s^{(1)}_F,s^{(2)}_F,\ldots,s^{(k-1)}_F$ (which are all degree $e$ polynomials in the $f_{j,\ell}$) all vanish on $J^{(k-1)}_{F,p}$, and $\overline{J}^{(k)}_{F,p}=V(s^{(k)}_F)$. By construction, we have $\overline{J}^{(k)}_{F,p}\subset b^{-1}(I^{(k)}_{F,p})$.

Note also that in these local coordinates, the exceptional divisor $E$ is the common vanishing locus of the equations $f_{j,0}$, and $s_F^{(k)}$ has total degree at least $e+1-k$ in the $f_{j,0}$. Alternatively, on the principal open subset $D_{+}(y_\ell)$ of $\bP^{r+1}$, we may substitute $f_{j,0}=f_{\ell,0}\cdot\frac{y_{j}}{y_\ell}$ for $j=0,1,\ldots,r+1$, and $f_{\ell,0}$ is a local equation cutting out the exceptional divisor $E$. Then, the equation $s^{(k)}_F$ is divisible by $f_{\ell,0}^{e+1-k}$. 

We therefore conclude that the section $s^{(k)}_F$ vanishes \textit{globally} upon restriction to the Cartier divisor $(e+1-k)E$, and thus is the image of a canonical section
$$\wt{s}^{(k)}_F\in H^0\left(J^{(k-1)}_{F,p},b^{*}\left(\nu^{*}\pi_{*}\left(\frac{\cP^{\otimes e}(-(k-1)p)}{\cP^{\otimes e}(-kp)}\right)\otimes\cO_{\bP(\cV^{\oplus r+2})}(e)\right)\otimes\cO_{Y_p}(-(e+1-k)E)\right).$$
Finally:
\begin{defn}\label{divided_def}
Define $J^{(k)}_{F,p}\subset J^{(k-1)}_{F,p}$ by $V(\wt{s}^{(k)}_F)$.
\end{defn}

Clearly, $J^{(k)}_{F,p}\subset\overline{J}^{(k)}_{F,p}\subset b^{-1}(I^{(k)}_{F,p})$. Locally, the section $\wt{s}^{(k)}_F$ is given by $s^{(k)}_F/f_{\ell,0}^{e+1-k}$ on $D_{+}(y_\ell)$ as above. We do not make the claim that $J^{(k)}_{F,p}$ does not have a component supported on $E$, so $J^{(k)}_{F,p}$ may be strictly larger than the proper transform of $I^{(k)}_{F,p}$. Furthermore, we do not claim that $J^{(k)}_{F,p}$ has the expected codimension of $k$ in $Y_p$. 

However, as a result of the construction, we have a canonical excess class $[J^{(e)}_{F,p}]^{\exc}\in A_{\dim(Y_p)-e}(J^{(e)}_{F,p})$ with the property that, upon restriction of the construction to any open neighborhood of $Y_p$ in which $J^{(e)}_{F,p}$ is pure of the expected codimension of $e$, we recover the fundamental class. Namely,

\begin{lem}\label{exc_class_virproptrans}
We have
\begin{equation*}
[J^{(e)}_{F,p}]^{\exc}=\left(\prod_{k=1}^{e}((k-1)H+(e+1-k)H_p)\right)\cap[Y_p]
\end{equation*}
in $A_{\dim(Y_p)-e}(J^{(e)}_{F,p})$.
\end{lem}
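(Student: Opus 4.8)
The plan is to unwind the inductive definition of $[J^{(e)}_{F,p}]^{\exc}$ and reduce the statement to computing the first Chern class of the line bundle carrying the section $\wt{s}^{(k)}_F$ at each stage. Recall that $Y_p$ is irreducible (it is the closure in $\bP(\cV^{\oplus r+2})\times\bP^{r+1}$ of the graph of the rational map $\ev$, equivalently the blowup of the irreducible variety $\bP(\cV^{\oplus r+2})$ along a projective subbundle), so $[J^{(0)}_{F,p}]^{\exc}=[Y_p]$ makes sense, and for $k=1,\ldots,e$ the class $[J^{(k)}_{F,p}]^{\exc}$ is obtained from $[J^{(k-1)}_{F,p}]^{\exc}$ by the refined intersection with the Cartier pseudo-divisor on $J^{(k-1)}_{F,p}$ cut out by $\wt{s}^{(k)}_F$, viewed as a section of the line bundle (call it $M_k$)
\[
M_k:=b^{*}\!\left(\nu^{*}\pi_{*}\!\left(\frac{\cP^{\otimes e}(-(k-1)p)}{\cP^{\otimes e}(-kp)}\right)\otimes\cO_{\bP(\cV^{\oplus r+2})}(e)\right)\otimes\cO_{Y_p}(-(e+1-k)E).
\]
This operation (intersection with a divisor in the sense of Fulton's \emph{Intersection Theory}, Chapter~2) is defined for an arbitrary scheme and cycle class, is unaffected by the dimension or reducedness of the intermediate $J^{(k)}_{F,p}$, commutes in $k$, and is compatible with the inclusions $J^{(k)}_{F,p}\hookrightarrow Y_p$. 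Thus $[J^{(e)}_{F,p}]^{\exc}$ is the iterated class $c_1(M_e)\cap\cdots\cap c_1(M_1)\cap[Y_p]$ (refined to $J^{(e)}_{F,p}$), and the entire statement reduces to the identity $c_1(M_k)=(k-1)H+(e+1-k)H_p$.

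To prove that identity I would first check that the line bundle $\pi_{*}\!\left(\frac{\cP^{\otimes e}(-(k-1)p)}{\cP^{\otimes e}(-kp)}\right)$ on $\Jac^d(C)$ is trivial, hence contributes nothing to $c_1(M_k)$. This is exactly where the choice of local coordinate $t$ at $p$ is used: $t$ restricts from a nowhere-vanishing section trivializing $\cP$ on a formal neighborhood of $\{p\}\times\Jac^d(C)$, so the successive-quotient sheaf is supported on $\{p\}\times\Jac^d(C)$ and is canonically $t^{k-1}\cdot(\cP|_{\{p\}\times\Jac^d(C)})^{\otimes e}\cong\cO_{\{p\}\times\Jac^d(C)}$; pushing forward along the isomorphism $\{p\}\times\Jac^d(C)\cong\Jac^d(C)$ yields triviality. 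Granting this, $c_1(M_k)=e\cdot b^{*}c_1(\cO_{\bP(\cV^{\oplus r+2})}(1))-(e+1-k)[E]=eH-(e+1-k)[E]$, and substituting Lemma \ref{exc_class} ($[E]=H-H_p$) gives $c_1(M_k)=eH-(e+1-k)(H-H_p)=(k-1)H+(e+1-k)H_p$. Plugging this into the iterated formula finishes the proof.

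The content is essentially formal, so the only real care needed is bookkeeping: one must read the right-hand side $\left(\prod_{k=1}^{e}((k-1)H+(e+1-k)H_p)\right)\cap[Y_p]$ as the iterated \emph{localized} intersection with the specific sections $\wt{s}^{(k)}_F$ — whose supports are precisely the $J^{(k)}_{F,p}$ — so that the resulting class genuinely lives in $A_{\dim(Y_p)-e}(J^{(e)}_{F,p})$ and not merely in $A_{*}(Y_p)$; the fact that its image in $A_{*}(Y_p)$ is the naive product $\left(\prod c_1(M_k)\right)\cap[Y_p]$ is Fulton's functoriality for intersecting with divisors. The secondary subtlety worth spelling out is the normalization of the Poincar\'e bundle implicit in the choice of $t$; everything else — the local expressions for $\wt{s}^{(k)}_F$ and the divisibility by $f_{\ell,0}^{e+1-k}$ — was already established in the construction preceding the statement, so no further computation is required.
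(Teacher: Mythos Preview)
Your proposal is correct and follows essentially the same approach as the paper: identify $[J^{(e)}_{F,p}]^{\exc}$ as the iterated intersection with the first Chern classes of the line bundles $M_k$ carrying $\wt{s}^{(k)}_F$, then compute $c_1(M_k)=(k-1)H+(e+1-k)H_p$ using Lemma~\ref{exc_class} together with the triviality of $\pi_{*}\!\left(\cP^{\otimes e}(-(k-1)p)/\cP^{\otimes e}(-kp)\right)$. The paper's proof is simply a terse version of yours; your additional remarks on Fulton's refined intersection formalism and on why the quotient bundle is trivial are helpful elaborations of points the paper leaves implicit.
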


\begin{proof}
$J^{(e)}_{F,p}\subset Y_p$ is cut out successively by the vanishing of sections of the line bundles $$b^{*}\left(\nu^{*}\pi_{*}\left(\frac{\cP^{\otimes e}(-(k-1)p)}{\cP^{\otimes e}(-kp)}\right)\otimes\cO_{\bP(\cV^{\oplus r+2})}(e)\right)\otimes\cO_{Y_p}(-(e+1-k)E)$$ for $k=1,2,\ldots,e$. The claim now follows from Lemma \ref{exc_class} and the triviality of the line bundle $\pi_{*}\left(\frac{\cP^{\otimes e}(-(k-1)p)}{\cP^{\otimes e}(-kp)}\right)$.
\end{proof}

\subsection{Main setup}\label{setup_section}
We now set up the intersection theory calculation computing $\Tev^{X}_{g,d,n}$. In the first two steps, we repeat the construction of \S\ref{blowup_section} for the marked points $p_1,\ldots,p_n$ taken together. Then, we impose the condition that our map $f:C\to\bP^{r+1}$ lie on $X$ globally (as opposed to ``to order $e$" at the $p_i$). Finally, we impose the incidence conditions at the marked points.

\begin{enumerate}
\item[(1)] Let $Y\subset \bP(\cV^{\oplus r+2})\times(\bP^{r+1})^{n}$ be the intersection of the loci $Y_{p_i}$ pulled back from each of the $n$ factors $(\bP^{r+1})^{n}$, as defined in the previous section. Explicitly, $Y$ is the locus of $(f,x'_1,\ldots,x'_n)$ where, for each $i$, either $p_i$ is a base-point of $f$ or $f(p_i)=x'_i$.


\item[(2)] We define a closed subscheme $J^{(e)}_F\subset Y$ by iterating the construction of \S\ref{blowup_section} for $p_1,\ldots,p_n$, one point at a time. More precisely, in the first step, we first define $J^{(e)}_{F,p_1}\subset Y$ by pulling back the construction of $J^{(e)}_{F,p_1}\subset Y_{p_1}$ to $Y$. In the second, we carry out the construction of $J^{(e)}_{F,p_2}$ now on the ambient space given by the pullback of $J^{(e)}_{F,p_1}$ to $Y$ instead of on $Y_{p_2}$, cutting out the successive conditions that $F$ vanish to order $k=1,2,\ldots,e$ at $p_2$, now having already imposed the additional condition of $e$-order vanishing at $p_1$. There are $n$ steps in total.

At the $m$-th step of this construction, we use the fact that the line bundle $\cL^{\otimes e}(-ep_1-\cdots-ep_{m-1}-kp_{m})$ has degree at least
\begin{equation}\label{cbcineq}
e(d-n)=e\left(d\cdot\frac{e-2}{r}+g-1\right)>2g-2\tag{$*$}
\end{equation}
when $k\le e$, when imposing the condition that a canonical section of $\cL^{\otimes e}(-ep_1-\cdots-ep_{m-1}-kp_{m}))$ vanishes to order $k$ at $p_m$.

We obtain in the end a subscheme $J^{(e)}_{F}\subset Y$ of expected (but not necessarily actual) codimension $ne$, equal to the intersection of the pullbacks of the $J^{(e)}_{F,p_j}\subset Y_{p_j}$ to $Y$, and contained in the locus of $(f,x'_1,\ldots,x'_n)$ for which $F(f_0,\ldots,f_{r+1})$ vanishes to order $e$ at all of the $p_i$. We also get an excess cycle $[J^{(e)}_{F}]^{\exc}\in A_{\dim(Y)-ne}(J^{(e)}_{F})$ in a similar way to before, from the first Chern classes of the line bundles whose sections cut out $J^{(e)}_{F}$.

\item[(3)] Now, on $J^{(e)}_F$, because the section $F(f_0,\ldots,f_{r+1})\in H^0(C,\cL^{\otimes e})$ vanishes to order $e$ at $p_1,\ldots,p_n$, it may be regarded naturally as a section of $H^0(C,\cL^{\otimes e}(-ep_1-\cdots-ep_n))$. We then define $J_F\subset J^{(e)}_F$ by the vanishing of this section.

Globally, we have a canonical section 
\begin{equation*}
s'_F\in H^0(\bP(\cV^{\oplus r+2}),\nu^{*}(\pi_{*}(\cP^{\otimes e}(-ep_1-\cdots-ep_n))\otimes\cO_{\bP}(e))),
\end{equation*}
and $J_F\subset J^{(e)}_F$ is by definition equal to $V(s'_F)$. We have applied here the inequality \eqref{cbcineq} to ensure that the line bundle $\cP^{\otimes e}(-ep_1-\cdots-ep_n)$ has vanishing higher cohomology on the fibers of $\pi$ (note that it is crucial here that $e\ge3$). Note in particular that $F(f_0,\ldots,f_{r+1})$ vanishes (now to every order at every point) everywhere on $J_F$.

As in step (2), we do not assume that $V(s'_F)$ has the expected codimension of $e(d-n)-g+1$. However, we get an excess class
$$[J_{F}]^{\exc}:=c_{e(d-n)-g+1}(\nu^{*}(\pi_{*}(\cP^{\otimes e}(-ep_1-\cdots-ep_n))\otimes\cO_{\bP}(e)))\cap[J^{(e)}_{F}]^{\exc}$$
in $A_{\dim(Y)-(de-g+1)}(J_{F})$ agreeing with the usual fundamental class upon restriction to any open neighborhood of $Y$ in which we have dimensional transversality for both steps (2) and (3).

\item[(4)] Finally, we define $T\subset J_F$ by the condition that $x'_i\in L_i$ for all $i$; $T$ is cut out by $r$ linear conditions on each of the $n$ factors $\bP^{r+1}$.
\end{enumerate}

The expected codimension of $T\subset \bP(\cV^{\oplus r+2})\times(\bP^{r+1})^n$ is
\begin{equation*}
n(r+1)+en+[e(d-n)-g+1]+rn=g+[(r+2)(d-g+1)-1]+(r+1)n,
\end{equation*}
so the expected dimension of $T$ is 0. When $(f,x'_1,\ldots,x'_n)\in T$ and $f$ has no base-points, the degree $d$ morphism $f:C\to X$ has $f(p_i)\in L_i\subset X$.

\subsection{Transversality}\label{transversality_section}

We now analyze in more detail the construction above, to show that, under the conditions of Theorem \ref{main_thm_refined}, $T$ is in fact a reduced zero-dimensional scheme whose points correspond (up to a factor of $e^{n}$, owing to the extra intersections of $L_i$ with $X$) to maps enumerated in $\Tev^X_{g,d,n}$. There are three steps:

\begin{enumerate}
\item Describe the points of $T$ set-theoretically (Proposition \ref{T_settheoretic})
\item Show, under the conditions of Theorem \ref{main_thm_refined}, that all points of $T$ correspond to maps counted in $\Tev^X_{g,d,n}$ (Corollary \ref{T_boundary_cor})
\item Show that $T$ is reduced of dimension 0 at all such points (Proposition \ref{transverse})
\end{enumerate}

\begin{prop}\label{T_settheoretic}
Suppose that $(f,x'_1,\ldots,x'_n)\in T$. Let $\wt{f}:C\to\bP^{r+1}$ be the map defined by $f$ as in \S\ref{prelim_section}, obtained by twisting down base-points. Then, $\wt{f}$ has image in $X$, and in addition satisfies the property that for all $i=1,2,\ldots,n$, one of the following holds:
\begin{enumerate}
\item $p_i$ is not a base-point of $f$ and $\wt{f}(p_i)=x'_i$, or
\item $p_i$ is a simple base-point of $f$, and $\wt{x}_i:=\wt{f}(p_i)$ and $x'_i$ lie on a line contained in $X$ (or are equal), or
\item $p_i$ is a double base-point of $f$.
\end{enumerate}
\end{prop}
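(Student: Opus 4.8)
## Proof proposal

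The plan is to analyze the three local conditions imposed in steps (2)--(4) of the construction at a single marked point $p_i$, according to whether $p_i$ is not a base-point, a simple base-point, or a double base-point of $f$, and to show that in each case the membership $(f,x'_1,\ldots,x'_n)\in T$ forces exactly the stated alternative. First, note that on all of $J_F\supset T$ the section $s'_F$ vanishes, and $F(f_0,\ldots,f_{r+1})\in H^0(C,\cL^{\otimes e})$ vanishes identically; hence $f$ (as a rational map) has image in $X$, and since $X$ is closed, so does the morphism $\wt{f}$ obtained by twisting down base-points. This gives the first assertion. It remains to handle each $p_i$ separately, so fix $i$ and work in the analytic local coordinate $t$ near $p_i$, writing $f_j = f_{j,0} + f_{j,1}t + \cdots$ as in \S\ref{blowup_section}.

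If $p_i$ is not a base-point, then the point $f_{i,0}:=(f_{0,0}:\cdots:f_{r+1,0})\in\bP^{r+1}$ is well-defined and equals $\wt{f}(p_i)$, and it lies on $X$ (as $\wt f$ maps to $X$). On $Y_{p_i}$ away from the exceptional divisor, $x'_i = f(p_i) = \wt f(p_i)$, so the constraint $x'_i\in L_i$ reads $\wt f(p_i)\in L_i$; this is case (i). If $p_i$ is a \emph{double} base-point, then by the discussion after Definition \ref{divided_def} each $\wt s^{(k)}_{F,p_i}$, being $s^{(k)}_{F,p_i}$ divided by $f_{\ell,0}^{e+1-k}$, may still vanish to positive order along $E$, and in fact when $p_i$ is a double base-point the local equations $f_{j,0}$ and $f_{j,1}$ all vanish — one checks that this forces every $s^{(k)}_{F,p_i}$ to vanish to order at least $e+1-k$ along the deeper locus, so that the divided sections $\wt s^{(k)}_{F,p_i}$ and $s'_F$ impose no further condition at $p_i$; likewise $x'_i$ is unconstrained except for $x'_i\in L_i$, which is automatically satisfiable. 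This is case (iii), and it is precisely the phenomenon flagged in \S\ref{further}(5).

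The heart of the argument is the simple-base-point case. Here $f_{j,0}=0$ for all $j$ but $f_{j,1}$ are not all zero, so $\wt f(p_i) = \wt x_i := (f_{0,1}:\cdots:f_{r+1,1})\in\bP^{r+1}$, and $\wt x_i\in X$. On $Y_{p_i}$, the point $(f,x'_1,\ldots,x'_n)$ lies on the exceptional divisor $E = \bP(\cV(-p_i)^{\oplus r+2})\times\bP^{r+1}$, so $x'_i$ is a free point of $\bP^{r+1}$; the constraint $x'_i\in L_i$ and the equations coming from $J_F$ must together pin down the relation between $\wt x_i$ and $x'_i$. The key computation is to expand $F(f_0,\ldots,f_{r+1})$ in $t$ along this locus: since every $f_j$ vanishes at $p_i$, we have $F(f_0,\ldots,f_{r+1}) = t^e\cdot F(f_{0,1},\ldots,f_{r+1,1}) + (\text{higher order in }t)$ after factoring, wait — more carefully, $F$ is homogeneous of degree $e$, so $F(f_0,\ldots,f_{r+1})$ vanishes to order exactly $e$ at $p_i$ with leading coefficient $F(\wt x_i)=0$; the condition $s'_F=0$ concerns the \emph{next} Taylor coefficients. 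I would show that, on the locus of $J^{(e)}_{F,p_i}$ over $E$, after dividing through, the equation $s'_F$ restricted near $p_i$ becomes (up to units) the bilinear-type expression obtained by polarizing $F$, namely the requirement that the pencil spanned by $\wt x_i$ and $x'_i$ — more precisely, the line through $\wt x_i$ and $x'_i$ in $\bP^{r+1}$ — be contained in $X$. Concretely, writing the line as $s\mapsto \wt x_i + s\,(x'_i-\wt x_i)$ (or a projective version thereof), the condition $F\equiv 0$ to sufficiently high order in $t$ after the divisions becomes $F$ vanishing identically on that line, i.e.\ all $e+1$ coefficients of $F$ restricted to the line vanish; since $F(\wt x_i)=0$ already and $x'_i\in X$ forces $F(x'_i)=0$, the remaining $e-1$ conditions are exactly cut out by the divided sections $\wt s^{(k)}_{F,p_i}$ together with $s'_F$. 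This yields alternative (ii) (with the degenerate sub-case $\wt x_i = x'_i$ when the ``line'' collapses).

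The main obstacle I anticipate is the bookkeeping in this last step: tracking precisely how each of the $e$ divided sections $\wt s^{(k)}_{F,p_i}$ (for $k=1,\ldots,e$) plus the global section $s'_F$ translate, in the local coordinate $t$ and after the successive divisions by powers of $f_{\ell,0}$, into the statement ``$F$ restricted to the line $\overline{\wt x_i\, x'_i}$ vanishes to order $e$ (hence identically, as it is a degree-$e$ form on a $\bP^1$).'' One must be careful that the divisions performed in Definition \ref{divided_def} are by exactly the right powers so that the surviving conditions are neither too few (which would let spurious degenerate $f$ into $T$) nor too many (which would kill honest solutions); the identity $[E]=H-H_p$ of Lemma \ref{exc_class} and the order-of-vanishing count ``total degree at least $e+1-k$ in the $f_{j,0}$'' from \S\ref{blowup_section} are what make this work, and I would lean on them explicitly. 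A secondary subtlety is to verify that in case (iii) one genuinely imposes \emph{no} condition, which requires checking that the higher Taylor coefficients $f_{j,1}$ vanishing is enough to absorb all the divided equations — this is where the hypothesis $e\ge 3$ versus the behavior at double base-points interacts with the degree count, though for the set-theoretic statement at hand it suffices to observe that such $(f,x'_1,\ldots,x'_n)$ are not excluded.
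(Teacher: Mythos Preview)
Your overall strategy matches the paper's: work in the local coordinate $t$ at $p_i$, identify the divided equations $\wt{s}^{(k)}_F$ restricted to the exceptional divisor with the homogeneous pieces of $F$ along the line through $\wt{x}_i$ and $x'_i$, and conclude that this line lies in $X$. Cases (i) and (iii) need no further argument (case (iii) is vacuous as stated). However, your bookkeeping in case (ii) has a genuine error that would leave the proof incomplete.

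You write ``since $F(\wt x_i)=0$ already and $x'_i\in X$ forces $F(x'_i)=0$, the remaining $e-1$ conditions are exactly cut out by the divided sections $\wt s^{(k)}_{F,p_i}$ together with $s'_F$.'' But nothing in the construction tells you $x'_i\in X$ in advance: step (4) only imposes $x'_i\in L_i$, and $L_i$ is a general line not contained in $X$. The statement $x'_i\in X$ is part of the \emph{conclusion}, not a hypothesis. Relatedly, your count is off: there are $e$ divided sections $\wt s^{(k)}_F$ for $k=1,\ldots,e$, and $s'_F$ is a \emph{global} section whose only role here is to force $F(f_0,\ldots,f_{r+1})\equiv 0$ on $C$, hence $F(\wt x_i)=0$; it is not an additional local equation at $p_i$.

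The paper's computation makes the accounting precise. On $D_+(y_\ell)$, substitute $f_{j,0}=f_{\ell,0}\cdot y_j/y_\ell$, divide $s^{(k)}_F$ by $f_{\ell,0}^{e+1-k}$, and set $f_{\ell,0}=0$. What survives in $\wt s^{(k)}_F$ is exactly the bidegree $(k-1,\,e+1-k)$ part, in the variables $(f_{j,1},y_j)$, of the expansion of $F(\alpha f_{\cdot,1}+\beta y_{\cdot})$. Thus the $e$ equations $\wt s^{(1)}_F=\cdots=\wt s^{(e)}_F=0$ kill the coefficients of $\beta^e,\alpha\beta^{e-1},\ldots,\alpha^{e-1}\beta$, and the remaining coefficient of $\alpha^e$, namely $F(\wt x_i)$, vanishes because $F(f_0,\ldots,f_{r+1})\equiv 0$ on $J_F$. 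In particular $\wt s^{(1)}_F|_E = F(y_0,\ldots,y_{r+1})=F(x'_i)$, so $x'_i\in X$ is the \emph{content} of the first divided equation, not an input. Once you redo the count this way, your sketch becomes the paper's proof.
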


\begin{proof}
That $f$ has image in $X$ is guaranteed by steps (2) and (3) of the construction of $T$, as the section $F(f_0,\ldots,f_{r+1})\in H^0(C,\cL^{\otimes e})$ is constrained to vanish. If $p_i$ is not a base-point of $f$, then step (1) guarantees that $\wt{f}(p_i)=x'_i$. Now, it remains to consider the case in which $f$ has a simple base-point of $f$ at $p_i$.

We adopt the local coordinates from \S\ref{blowup_section} above, that is, the construction of $J^{(e)}_{F,p}\subset Y_{p}\subset\bP(\cV^{\oplus r+2})\times\bP^{r+1}$, where we take $p=p_i$. If $f$ has a simple base-point at $p$, that is, the $f_{j,0}$ vanish for $j=0,\ldots,r+1$, but the $f_{j,1}$ are not all 0, then we have
\begin{equation*}
\wt{x}:=\wt{x}_i=[f_{0,1}:\cdots:f_{r+1,1}].
\end{equation*}

Consider now the equations $\wt{s}^{(k)}_F$. As in the discussion preceding Definition \ref{divided_def}, we restrict to the principal open subset $D_{+}(y_\ell)$ of $\bP^{r+1}$, where we may replace the equations $f_{j,0}$ with $f_{\ell,0}\cdot\frac{y_{j}}{y_\ell}$, then divide the equation $s^{(k)}_F$ by $f_{\ell,0}^{e+1-k}$ to obtain $\wt{s}^{(k)}_F$. For convenience, we also multiply $s^{(k)}_F$ by the unit $y_\ell^{e+1-k}$. Then, upon restriction to the locus $f_{\ell,0}=0$ (that is, the exceptional divisor, where $f$ has a base-point at $p$), the only surviving terms of the equations $\wt{s}^{(k)}_F$, for $k=0,1,\ldots,e-1$, are the monomials given by $e-k$ factors of the form $y_{j}$ and $k$ factors of the form $f_{j,1}$. 

Now, for indeterminates $\alpha,\beta$, consider the expansion
\begin{equation*}
F(\alpha f_{0,1}+\beta y_{0},\ldots,\alpha f_{r+1,1}+\beta y_{r+1})
\end{equation*}
as a polynomial of total degree $e$ in the $f_{j,1},y_{j}$, where we evaluate the function $F$ at a point $\alpha\cdot \wt{x}+\beta\cdot y\in\bP^{r+1}$ lying on the line between $\wt{x}$ and $x'=y=[y_{0}:\cdots:y_{r+1}]$. Then, the sum of the terms of total bidegree $(k,e-k)$ in the $f_{j,1},y_{j}$, respectively, is precisely $\alpha^k\beta^{e-k}\wt{s}^{(k+1)}_F$, which vanishes for $k=0,1,\ldots,e-1$ by the construction of $J^{(e)}_F$, and for $k=e$ by the fact that $F_f=0$ on $J_F$.

In particular, any point on the line between $\wt{x}$ and $x'$ vanishes on $F$, so we obtain the needed conclusion.
\end{proof}

We now analyze the ``degenerate'' points of $T$ parametrizing $f$ having base-points. We will require the following lemma, cf. \cite[Proof of Proposition 26]{lp}.

\begin{lem}\label{h1_bound}
Let $D$ be a smooth curve of genus $g$, and let $j:D\to X$ be a morphism of degree $d\ge0$. Then
\begin{equation*}
h^1(D,j^{*}T_X)\le de+1+g(r+2).
\end{equation*}
\end{lem}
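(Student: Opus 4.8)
The plan is to bound $h^1(D,j^*T_X)$ by using the normal bundle sequence relating $T_X$ to the restriction of $T_{\bP^{r+1}}$ and the normal bundle $N_{X/\bP^{r+1}}=\cO_X(e)$. First I would pull back the short exact sequence
\begin{equation*}
0\to T_X\to T_{\bP^{r+1}}|_X\to \cO_X(e)\to 0
\end{equation*}
along $j:D\to X$, obtaining
\begin{equation*}
0\to j^*T_X\to j^*T_{\bP^{r+1}}\to j^*\cO_X(e)\to 0
\end{equation*}
on $D$. Taking the long exact sequence in cohomology gives a surjection $H^1(D,j^*T_X)\twoheadleftarrow$ nothing useful directly, but it does give an exact piece $H^0(D,j^*\cO_X(e))\to H^1(D,j^*T_X)\to H^1(D,j^*T_{\bP^{r+1}})\to H^1(D,j^*\cO_X(e))\to 0$, so that
\begin{equation*}
h^1(D,j^*T_X)\le h^1(D,j^*T_{\bP^{r+1}})+h^0(D,j^*\cO_X(e)).
\end{equation*}

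Next I would bound each of the two terms. For $j^*\cO_X(e)$: this is a line bundle of degree $de$ on $D$, so by Riemann-Roch $h^0(D,j^*\cO_X(e))=de+1-g+h^1(D,j^*\cO_X(e))\le de+1-g+h^1(\cdots)$; more crudely $h^0\le de+1$ whenever $j^*\cO_X(e)$ is nonspecial, but to be safe I would instead observe $h^0(D,j^*\cO_X(e))\le de+1$ always fails in general (it can be larger only if... actually $h^0\le \deg+1=de+1$ does hold for any line bundle on any curve, since a nonzero section has at most $\deg$ zeros — wait, that bounds the dimension of sections vanishing at a point). Cleanly: $h^0(D,L)\le \deg L+1$ for any line bundle $L$ of nonnegative degree on a curve, by induction on degree subtracting a point. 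So $h^0(D,j^*\cO_X(e))\le de+1$. For $j^*T_{\bP^{r+1}}$: pull back the Euler sequence $0\to\cO\to\cO(1)^{\oplus r+2}\to T_{\bP^{r+1}}\to 0$ to get $0\to\cO_D\to (j^*\cO_{\bP^{r+1}}(1))^{\oplus r+2}\to j^*T_{\bP^{r+1}}\to 0$, hence $h^1(D,j^*T_{\bP^{r+1}})\le (r+2)h^1(D,j^*\cO_{\bP^{r+1}}(1))$. Since $j^*\cO_{\bP^{r+1}}(1)$ has degree $d\ge0$, Riemann-Roch and Clifford (or just $h^1(L)=h^0(K_D-L)\le g$ whenever $\deg L\ge0$, because $h^0$ of any line bundle is at most $g+1$ and... ) give $h^1(D,j^*\cO_{\bP^{r+1}}(1))\le g$: indeed $h^1(D,L)=h^0(D,\omega_D\otimes L^{-1})$, and $\omega_D\otimes L^{-1}$ has degree $2g-2-d\le 2g-2$, and $h^0$ of a line bundle of degree $\le 2g-2$ on a genus-$g$ curve is at most $g$ (it is $\le \frac{1}{2}\deg+1\le g$ by Clifford if effective, and $0$ if not). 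Combining, $h^1(D,j^*T_{\bP^{r+1}})\le (r+2)g$.

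Putting the pieces together yields
\begin{equation*}
h^1(D,j^*T_X)\le (r+2)g+de+1=de+1+g(r+2),
\end{equation*}
which is exactly the claimed bound. I expect the only subtle point to be the clean justification of the auxiliary inequality $h^1(D,L)\le g$ for a line bundle $L$ of nonnegative degree on a smooth genus-$g$ curve (equivalently $h^0\le g$ for degree $\le 2g-2$), which follows from Clifford's theorem together with the trivial case where the relevant line bundle has no sections; everything else is a routine diagram chase through the Euler and normal bundle sequences. One should also note the degenerate cases $d=0$ (where $j$ is constant, $j^*T_X$ is trivial of rank $r$, and $h^1=rg\le 1+g(r+2)$) and $g=0$ (where all the $h^1$ terms vanish and the bound reads $de+1$), both of which are covered by the same argument.
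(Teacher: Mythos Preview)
Your proof is correct and follows essentially the same route as the paper's: both use the normal bundle sequence to reduce to bounding $h^0(D,j^*\cO_X(e))$ and $h^1(D,j^*T_{\bP^{r+1}})$, then invoke the Euler sequence for the latter. The paper handles the bound $h^0(D,j^*\cO_X(e))\le de+1$ by Riemann--Roch plus the (implicit) inequality $h^1\le g$, which is equivalent to your argument via $h^0(L)\le\deg L+1$; your explicit appeal to Clifford's theorem to justify $h^1(D,L)\le g$ for $\deg L\ge 0$ is a bit more detailed than what the paper writes, but the content is the same.
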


\begin{proof}
From the short exact sequence 
\begin{equation*}
0\to T_X|_D\to T_{\bP^{r+1}}|_{D}\to \cO_{\bP^{r+1}}(e)|_{D} \to 0,
\end{equation*}
we have
\begin{equation*}
h^1(D,j^{*}T_X)\le h^0(D,\cO_{\bP^{r+1}}(e))+h^1(D,T_{\bP^{r+1}}).
\end{equation*}
To bound the first term, we have 
\begin{equation*}
h^0(D,\cO_{\bP^{r+1}}(e)) = de-g+1+h^1(D,\cO_{\bP^{r+1}}(e)) \le de+1.
\end{equation*}
For the second, the Euler sequence
\begin{equation*}
0\to \cO_{\bP^{r+1}}\to  \cO_{\bP^{r+1}}(1)^{\oplus r+2} \to T_{\bP^{r+1}} \to 0
\end{equation*}
shows that $h^1(D,T_{\bP^{r+1}})\le h^1(D,\cO_{\bP^{r+1}}(1)^{\oplus r+2})\le g(r+2)$. Combining yields the claim.
\end{proof}

Suppose now that, for general choices of $(C,p_1,\ldots,p_n)\in\cM_{g,n}$ and $x_i\in L_i\subset X$, there exists a point $(f,x'_1,\ldots,x'_n)\in T$ of the form that $f$ has:
\begin{itemize}
\item $b_1$ simple base-points at $p_1,\ldots,p_{b_1}$,
\item $b_2$ double base-points at $p_{n-b_2+1},\ldots,p_{n}$, and
\item $b_0$ base-points (\textit{counted with multiplicity}) outside of the $p_i$.
\end{itemize}
Assume that $b_0,b_1,b_2$ are not all 0, that is, $f$ fails to be base-point-free. 

By Lemma \ref{T_settheoretic}, $\wt{x}_1,\ldots,\wt{x}_{b_1}$ may be joined by lines on $X$ with points $x''_1,\ldots,x''_{b_1}$, respectively, where $x''_i\in L_i\cap X$. We use this fact to construct a stable map from a certain nodal curve to $X$. Namely, let $D$ be the union of a genus $g$ spine $(D_{\spine},p_1,\ldots,p_n)\cong (C,p_1,\ldots,p_n)$ containing the points $p_i$, with smooth rational tails $R_1,\ldots,R_{b_1}$ attached to $D_{\spine}$ at $p_1,\ldots,p_{b_1}$, respectively. Let $p'_i\in R_i$ be an arbitrary smooth point, for $i=1,2,\ldots,b_1$. 

Then, we construct the stable map $[\overline{f}:D\to X]\in \barM_{g,n-b_2}(X,d')$ by mapping $D_{\spine}$ to $X$ via $\wt{f}$, and by mapping each $R_i$ isomorphically to the line between $\wt{f}(p_i)$ and $x''_i$, such that $\overline{f}(p'_i)=x''_i$. Denote the degree of $\overline{f}$ by $d'=d-b_0-2b_2$.

The map $\overline{f}$ lives in a stratum
\begin{equation*}
\cM_{\Gamma}\subset \barM_{g,n-b_2}(X,d')
\end{equation*}
parametrizing stable maps whose domain curve is homeomorphic to $D$. (When $b_1=0$, we have that $\cM_{\Gamma}$ is the open stratum of stable maps with smooth domain.)

\begin{prop}\label{T_boundary}
Let $\tau:\cM_{\Gamma}\to\barM_{g,n-b_2}\times X^{n-b_2}$ be the forgetful map. Assume that at least one of $b_0,b_1,b_2$ is strictly positive. Then, under the assumptions of Theorem \ref{main_thm_refined}, the map $\tau$ is not dominant.
\end{prop}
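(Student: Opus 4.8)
The plan is to bound the dimension of the locus $\cM_\Gamma$ and compare it against the dimension of the target $\barM_{g,n-b_2}\times X^{n-b_2}$, showing the former is strictly smaller under the hypotheses of Theorem \ref{main_thm_refined}. First I would stratify: the domain of $\overline{f}$ decomposes as a genus $g$ spine $D_{\spine}\cong C$ with $b_1$ rational tails attached at fixed points $p_1,\dots,p_{b_1}$. Accordingly, the restriction $j:=\overline{f}|_{D_{\spine}}$ is a degree $d''=d-b_0-2b_2-b_1$ map from a \emph{fixed} general curve of genus $g$ to $X$ (the tails each absorb one unit of degree, mapping isomorphically to a line), and the $b_1$ tails contribute a space of lines on $X$ through the prescribed points $\wt{f}(p_i)$, together with the choice of the marked points $p_i'$ and the isomorphism parametrization. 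The key input is Lemma \ref{h1_bound}: the space of maps $j:C\to X$ of degree $d''$ from the \emph{fixed} curve $C$ has dimension at most $\chi(C,j^*T_X)+h^1(C,j^*T_X) = d''(r+2-e) - r(g-1) + h^1 \le d''(r+2-e)-r(g-1)+(d''e+1+g(r+2))$, and one similarly controls the family of rational tails using the fact that the space of lines on $X$ through a fixed point has dimension at most $r-e$ (using $e\le r-1$, which follows from $r>(e+1)(e-2)$).

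Next I would compute both sides precisely. The target $\barM_{g,n-b_2}\times X^{n-b_2}$ has dimension $(3g-3+n-b_2)+r(n-b_2)$. For the source, each of the $b_1$ rational tails contributes at most $(r-e)$ (lines on $X$ through a fixed point of $X$) $+1$ (position of $p_i'$ on the tail, though this gets absorbed or cancelled against the gluing data) plus the extra moduli of a $\bP^1$ with two special points, which is $0$; the delicate bookkeeping is that attaching a rational tail at a \emph{fixed} point $p_i$ costs dimension compared to the smooth case, and $\tau$ forgets the tails, mapping $\cM_\Gamma$ into $\barM_{g,n-b_2}$ via the spine. So $\dim\cM_\Gamma \le \dim(\text{maps } j:C\to X \text{ of degree } d'') + \sum_{i=1}^{b_1}(\text{dim of lines on }X\text{ through }\wt f(p_i)) + (\text{dim of }\barM_{g,n-b_2}\text{ in the spine directions})$. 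Plugging in the bound from Lemma \ref{h1_bound} and $d''\le d$, and using that $b_0+2b_2+b_1\ge 1$, the difference $\dim(\barM_{g,n-b_2}\times X^{n-b_2}) - \dim\cM_\Gamma$ is bounded below by a quantity of the form $\frac{r-(e+1)(e-2)}{r}d - (\text{terms linear in }g,e,r)$; here the normalization \eqref{vir_dims_match_hypersurface} is used to trade $n$ for $d$, and the coefficient $r-(e+1)(e-2)$ is exactly what forces the hypothesis $r>(e+1)(e-2)$. Requiring this lower bound to be positive yields precisely the stated bound $d>\frac{r((3g-2)(1+e)+1+g(r+2))}{r-(e+1)(e-2)}$, and when $g=0$ one checks the difference is already positive for all $d$ (this recovers \cite[Corollary 34]{lp}).

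The main obstacle I anticipate is the careful dimension accounting for the rational tails and the role of the auxiliary points $p_i'$ and the forgotten marked points: one must verify that the map $\tau$ genuinely loses the dimension contributed by the moduli of lines and that no coincidental extra dimension creeps in from reparametrizing the tails or from the constraint $x_i''\in L_i\cap X$ being automatically satisfied. In particular, it is important that in the stable map $\overline{f}$ the tail $R_i$ is \emph{rigid} relative to its two special points, so that the only moduli of the tail-plus-map data is the choice of line on $X$ through $\wt f(p_i)$ — this is where $e\le r-1$ enters, ensuring lines on $X$ through a point sweep out a variety of dimension $r-e\ge 1$ but also form a bounded family. A secondary subtlety is that $h^1(C,j^*T_X)$ could in principle be large if $j$ is very degenerate (e.g. constant on components), but since $D_{\spine}$ is irreducible and $C$ is general, $j$ is a genuine map of the stated degree $d''$ and Lemma \ref{h1_bound} applies directly; I would also note that the estimate is deliberately crude (cf. Remark on suboptimal bounds in \S\ref{further}) so there is slack to absorb lower-order corrections. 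Finally, I would conclude: since $\dim\cM_\Gamma < \dim(\barM_{g,n-b_2}\times X^{n-b_2})$, the forgetful map $\tau$ cannot be dominant, which is the assertion of Proposition \ref{T_boundary}.
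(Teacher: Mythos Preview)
Your overall strategy matches the paper's, but there is a genuine gap in the dimension count that the paper closes with a case distinction you have omitted.

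The problem is your uniform use of Lemma~\ref{h1_bound}. That lemma gives $h^1(D_{\spine},j^*T_X)\le d''e+1+g(r+2)$, which is \emph{linear in $d''$}. You then write ``plugging in \ldots\ $d''\le d$'', so your upper bound for $\dim\cM_\Gamma$ contains a term of size $de$. But the target dimension $\dim(\barM_{g,n-b_2}\times X^{n-b_2})$ grows only like $(r+1)n\sim\frac{(r+1)(r+2-e)}{r}d$, so the claimed lower bound for the difference of the form $\frac{r-(e+1)(e-2)}{r}d-\text{(const)}$ cannot come out of this computation. Concretely, take $b_0=1$, $b_1=b_2=0$: then $d''=d-1$, the $h^1$ estimate is roughly $de$, and your inequality reads $r+2-e>de+1+g(r+2)$, which is false for large $d$.

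The paper repairs this with a dichotomy on the number of marked points left on the spine. If $n-b_1-b_2\ge\max(2g,1)$, there are enough general point constraints that \cite[Proposition~13, Lemma~16]{lp} force the dominating components of $\cM_\Gamma$ to have exactly the virtual dimension; the comparison then reduces to $\vdim(\cM_\Gamma)<\dim(\barM_{g,n-b_2}\times X^{n-b_2})$, which needs only $r\ge 2e-3$ and no bound on $d$. If instead $n-b_1-b_2\le\max(2g-1,0)$, then $b_1$ is large, forcing the spine degree to be small:
\[
d''=d'-b_1\le d-n-b_2+\max(2g-1,0)=d\cdot\tfrac{e-2}{r}+O_{g}(1),
\]
and only now does the $h^1$ bound become usable, producing exactly the coefficient $\frac{r-(e+1)(e-2)}{r}$ and the displayed threshold on $d$. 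Your argument is essentially the second case applied everywhere; to make it correct you must either insert this case split, or replace your appeal to Lemma~\ref{h1_bound} in the first regime by the expected-dimension input from \cite{lp}. A secondary point: the assertion that lines on $X$ through a point have dimension $r-e$ is not automatic for an arbitrary smooth hypersurface; the paper instead argues that lines through a \emph{general} point have $T_X|_{R_i}$ globally generated, which is what guarantees they move in the expected dimension.
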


\begin{proof}

Let $\delta=\vdim(\barM_{g,n}(X,d))=\dim(\barM_{g,n}\times X^{n})$. Then, we have
\begin{equation*}
\dim(\barM_{g,n-b_2}\times X^{n-b_2})=\Delta-(r+1)b_2
\end{equation*}
and 
\begin{align*}
\vdim(\cM_{\Gamma})&=\vdim(\barM_{g,n-b_2}(X,d'))-b_1\\
&=\delta-(d-d')(r+2-e)-b_2-b_1\\
&\le \delta-(2r+5-2e)b_2-(r+2-e)b_0-b_1.
\end{align*}

Let $\cM_{\Gamma}^{\dom}$ be the union of the components of $\cM_{\Gamma}$ dominating the target of $\tau$. We now argue as in \cite[Proposition 22]{lp} to show that
\begin{equation*}
\dim(\cM_{\Gamma}^{\dom})<\dim(\barM_{g,n-b_2}\times X^{n-b_2}),
\end{equation*}
which will immediately yield a contradiction.

If $n-b_1-b_2\ge \max(2g,1)$, then \cite[Proposition 13, Lemma 16]{lp} implies that  $\dim(\cM_{\Gamma}^{\dom})=\vdim(\cM_{\Gamma})$, so we have
\begin{align*}
\vdim(\cM_{\Gamma})&=\delta-(2r+5-2e)b_2-(r+2-e)b_0-b_1\\
&<\delta-(r+1)b_2\\
&=\dim(\barM_{g,n-b_2}\times X^{n-b_2}),
\end{align*}
as long as $r\ge 2e-3$ and at least one of $b_0,b_1,b_2$ is strictly positive.

Otherwise, suppose that $n-b_1-b_2\le \max(2g-1,0)$. We claim first that
\begin{equation*}
\dim(\cM_{\Gamma}^{\dom})\le\vdim(\cM_{\Gamma})+h^1(D_{\spine},\overline{f}^{*}T_X).
\end{equation*}
Indeed, for a general stable map $\overline{f}$ in $\cM_{\Gamma}^{\dom}$, all components $R_i$ of $D$ other than $D_{\spine}$ are lines passing through a general point of $X$, onto which the restriction of $T_X$ is therefore globally generated. Indeed, we have $T_X|_{D}\cong\sum_{i=1}^{r}\cO(a_i)$ for some integers $a_i$, and the $a_i$ must be non-negative in order for the points $x_i=\overline{f}(p_i)$ to deform to first order in all tangent directions.

In particular, these lines move in a family of expected dimension (\cite[Proposition 13]{lp}), and the condition of passing through any fixed node locally has the expected codimension of $r$. As a result, the excess dimension $\dim(\cM_{\Gamma}^{\dom})-\vdim(\cM_{\Gamma})$ is bounded above by that of the stable map when restricted to $D_{\spine}$, which is $h^1(D_{\spine},\overline{f}^{*}T_X)$.

We now have
\begin{align*}
\dim(\cM_{\Gamma}^{\dom})&\le\vdim(\cM_{\Gamma})+h^1(D_{\spine},\overline{f}^{*}T_X)\\
&\le \vdim(\cM_{\Gamma})+(d'-b_1)e+1+g(r+2)\\
&\le \vdim(\cM_{\Gamma})+(d-n-b_2+ \max(2g-1,0))e+1+g(r+2)\\
&=\delta-(2r+5-2e)b_2-(r+2-e)b_0-b_1\\
&\qquad\qquad+(d-n-b_2+\max(2g-1,0))e+1+g(r+2)\\
&\le \delta-(2r+5-2e)b_2-(n-b_2-\max(2g-1,0)))\\
&\qquad\qquad+(d-n-b_2+\max(2g-1,0))e+1+g(r+2)\\
&=\delta-(2r+4-e)b_2-n+(d-n)e\\
&\qquad\qquad+\max(2g-1,0)(1+e)+1+g(r+2)\\
&\le \dim(\barM_{g,n-b_2}\times X^{n-b_2})-n+(d-n)e\\
&\qquad\qquad+\max(2g-1,0)(1+e)+1+g(r+2)\\
&=\dim(\barM_{g,n-b_2}\times X^{n-b_2})+d\cdot\frac{(e+1)(e-2)-r}{r}\\
&\qquad\qquad+[\max(2g-1,0)+(g-1)](1+e)+1+g(r+2)\\
&<\dim(\barM_{g,n-b_2}\times X^{n-b_2}),
\end{align*}
as long as $(e+1)(e-2)<r$ and 
\begin{equation*}
d>\frac{r((\max(2g-1,0)+(g-1))(1+e)+1+g(r+2))}{r-(e+1)(e-2)}.
\end{equation*}
When $(e+1)(e-2)<r$ and $g=0$, the lower bound on $d$ is negative, so we obtain the conclusion for all $d$; if $g>0$, the bound becomes
\begin{equation*}
d>\frac{r((3g-2)(1+e)+1+g(r+2))}{r-(e+1)(e-2)}.
\end{equation*}
\end{proof}

\begin{cor}\label{T_boundary_cor}
Any $(f,x'_1,\ldots,x'_n)\in T$ has $f$ base-point free.
\end{cor}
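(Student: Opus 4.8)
The plan is to argue by contradiction using Proposition~\ref{T_boundary}. Suppose that, for general $(C,p_1,\ldots,p_n)$ and general $x_i\in L_i\subset X$, there is a point $(f,x'_1,\ldots,x'_n)\in T$ with $f$ \emph{not} base-point free. Let $B$ be the irreducible parameter space of all these choices --- a dense open subset of a $(\bP^{r})^{n}$-bundle over $\cM_{g,n}\times X^{n}$, where the $i$-th factor parametrizes lines through $x_i$ --- and let $\mathcal{T}$ be the relative version of $T$ over $B$. By hypothesis some irreducible component $\mathcal{T}'\subset\mathcal{T}$ whose generic point has $f$ base-pointed dominates $B$; after shrinking $\mathcal{T}'$, we may assume the invariants $(b_0,b_1,b_2)$ of Proposition~\ref{T_boundary} are constant on $\mathcal{T}'$ and not all zero.

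Applying Proposition~\ref{T_settheoretic} and the construction preceding Proposition~\ref{T_boundary} over $\mathcal{T}'$ (after a finite base change, to make consistent choices of the lines in $X$ joining $\wt{f}(p_i)$ to $x''_i$ and of the auxiliary points $p'_i\in R_i$), we obtain a family of stable maps $[\overline{f}:D\to X]\in\cM_\Gamma$, hence a rational map $\mathcal{T}'\dashrightarrow\cM_\Gamma$. Composing with the forgetful morphism $\tau$ gives a rational map
\begin{align*}
\Phi:\mathcal{T}'&\dashrightarrow\barM_{g,n-b_2}\times X^{n-b_2},\\
\overline{f}&\longmapsto\big((C,p_1,\ldots,p_{n-b_2});\,\overline{f}(p'_1),\ldots,\overline{f}(p'_{b_1}),\overline{f}(p_{b_1+1}),\ldots,\overline{f}(p_{n-b_2})\big),
\end{align*}
where we use that stabilizing the domain of $\overline{f}$ contracts every tail $R_i$ and carries $p'_i$ to $p_i$, so the stabilized domain is $(C,p_1,\ldots,p_{n-b_2})$.

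The crux is that $\Phi$ is dominant. The $\barM_{g,n-b_2}$-factor is immediate, since $(C,p_1,\ldots,p_n)$ is general and hence so is $(C,p_1,\ldots,p_{n-b_2})$. For the $i$-th $X$-factor, the image point is $\overline{f}(p'_i)=x''_i\in L_i\cap X$ if $i\le b_1$ and $\overline{f}(p_i)=x'_i=\wt{f}(p_i)\in L_i\cap X$ if $i>b_1$; either way it lies on $L_i$, and $L_i$ varies over a dense subset of $\Gr(2,r+2)$ as $(x_i,L_i)$ varies in $B$. Since for distinct $i$ the pairs $(x_i,L_i)$ vary independently, and the $i$-th coordinate of $\Phi$ depends --- besides the spine $(C,p_j)$ and the map $f$ --- only on the $i$-th pair, varying one line at a time should show that $\Phi$ is dominant onto the product. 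As $\Phi$ factors through $\tau:\cM_\Gamma\to\barM_{g,n-b_2}\times X^{n-b_2}$, this forces $\tau$ to be dominant, contradicting Proposition~\ref{T_boundary}. Hence no point of $T$ has a base-point.

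I expect the dominance of $\Phi$ onto $X^{n-b_2}$ to be the main obstacle. The delicate point is that the point of $L_i\cap X$ singled out by $f$ is not a free parameter but is determined by $f$; one must check that as the choices vary it nevertheless sweeps out a dense subset of $X$ (equivalently, that it is not confined to a proper subvariety of $X$ met by a general line $L_i$), and that the $n-b_2$ coordinates move independently. One should also check that the degenerate instances of Proposition~\ref{T_settheoretic}(ii) --- e.g.\ $\wt{f}(p_i)=x'_i$, where the tail $R_i$ collapses to a point --- only decrease $\dim\mathcal{T}'$, so that they may be excluded after the evident modification of the construction of $\overline{f}$.
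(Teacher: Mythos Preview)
Your overall strategy is sound, and you correctly identify the point at which it is incomplete: the dominance of $\Phi$ onto $X^{n-b_2}$. This gap is genuine and not easily closed along the lines you suggest. The problem is that the $i$-th coordinate of $\Phi$ does \emph{not} depend only on $(C,p_j)$, $f$, and the $i$-th pair $(x_i,L_i)$ as you assert: the map $f$ itself is determined by \emph{all} of the incidence data simultaneously (it must satisfy $f(p_j)\in L_j$ for every $j$), so varying $L_i$ alone does not move $x''_i$ or $x'_i$ freely through $X$. There is no a priori reason the image of $\Phi$ is not contained in some proper subvariety of $X^{n-b_2}$ that nevertheless meets a general fiber of your $(\bP^r)^n$-bundle.

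The paper sidesteps this by inverting the logic. First fix a general pointed curve $(C,p_1,\ldots,p_n)$. Since (by Proposition~\ref{T_boundary}) each stratum $\cM_\Gamma$ with $(b_0,b_1,b_2)\neq(0,0,0)$ fails to dominate $\barM_{g,n-b_2}\times X^{n-b_2}$, there is a nonempty open $U\subset X^n$ such that no degenerate stable map $\overline{f}$ has $\overline{f}(p'_i)=x''_i$ for any $(x''_1,\ldots,x''_n)\in U$. Now fix $(x_1,\ldots,x_n)\in U$ general and consider the proper map
\[
\lambda:\,X^n\setminus U\;\longrightarrow\;\prod_{i=1}^n\bP_i,\qquad (x''_1,\ldots,x''_n)\longmapsto(\overline{x_ix''_i})_i,
\]
where $\bP_i\cong\bP^r$ is the space of lines through $x_i$. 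Since $X^n\setminus U$ is a proper closed subset of $X^n$, it has dimension $<rn=\dim\prod_i\bP_i$, so $\lambda$ cannot be dominant. A general tuple of lines $(L_1,\ldots,L_n)$ therefore lies outside its image, which means every configuration $(x''_1,\ldots,x''_n)$ with $x''_i\in L_i\cap X$ already lies in $U$, and no degenerate point of $T$ can exist.

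The key difference is that the paper works with the \emph{bad locus} $X^n\setminus U$ and a map \emph{out of it}, whose non-dominance is a one-line dimension count, rather than with a map into $X^{n-b_2}$ whose source is entangled with $f$.
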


\begin{proof}
Proposition \ref{T_boundary} shows that $\cM_{\Gamma}$ does not dominate the target of the forgetful map
\begin{equation*}
\tau:  \barM_{g,n-b_2}(X,d')\to  \barM_{g,n-b_2}\times X^{n-b_2}.
\end{equation*}
Thus, varying over all possible $\Gamma$, we conclude that for a fixed general pointed curve $(C,p_1,\ldots,p_n)$, there exists a non-empty open set $U\subset X^n$ with the property that, for any $(x''_1,\ldots,x''_n)\in U$, there does not exist a stable map $\overline{f}$ as above with $(b_0,b_1,b_2)\neq(0,0,0)$ and $\overline{f}(p'_i)=x''_i$.

It now suffices to show that, for fixed $(x_1,\ldots,x_n)\in U$, a general choice of lines $L_i\ni x_i$ (assumed transverse to $X$) has the property that, for any choices of $x''_i\in L_i\cap X$, we have $(x''_1,\ldots,x''_n)\in U$. Suppose that this is not the case. Then, consider the (proper) map
$$\lambda:X^n\backslash U\to \prod_i\bP_i,$$
where $\bP_i\cong\bP^r$ is the space of lines through $x_i$, sending $(x''_1,\ldots,x''_n)\in X^n$ to the lines $L_i$ through $x_i,x''_i$. For dimensional reasons, $\lambda$ cannot be dominant, and a general collection of $(L_1,\ldots,L_n)$ in the complement of the image suffices.
\end{proof}

\begin{prop}\label{transverse}
Under the assumptions of Theorem \ref{main_thm_refined}, the subscheme $T$ is reduced of the expected dimension 0. In particular, the intermediate subschemes 
\begin{equation*}
J_F\subset J^{(e)}_F\subset Y\subset \bP(\cV^{\oplus r+2})\times(\bP^{r+1})^{n}
\end{equation*}
have the expected dimension in a neighborhood of $T$.
\end{prop}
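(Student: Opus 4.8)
\textbf{Proof plan for Proposition \ref{transverse}.}
By Corollary \ref{T_boundary_cor}, every point $(f,x'_1,\ldots,x'_n)\in T$ has $f$ base-point-free, so $\wt f=f$ is an honest morphism $C\to X$ of degree $d$ with $f(p_i)=x'_i\in L_i$ for all $i$. The plan is to show that at such a point the ambient scheme $T$ coincides, locally, with the fiber product over $\cM_{g,n}\times X^n$ of a neighborhood in $\cM_{g,n}(X,d)$ with the point $([(C,p_1,\ldots,p_n)],x_1,\ldots,x_n)$, up to the $e-1$ extra intersection points coming from each $L_i\cap X$, and that this fiber product is reduced and zero-dimensional.

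First I would verify that near such a point the intermediate schemes behave well. Since $f$ has no base-point at any $p_i$, the blowup map $b$ is a local isomorphism near $f$, so locally $Y$ is just the graph of $\ev$ and $J^{(e)}_F$, $J_F$ reduce to the naive conditions ``$F(f_0,\ldots,f_{r+1})$ vanishes to order $e$ at each $p_i$'' and ``$F(f_0,\ldots,f_{r+1})\equiv0$'' respectively; the divided sections $\wt s^{(k)}_F$ differ from $s^{(k)}_F$ by units. Hence in a neighborhood of the point, $T$ is cut out inside $\bP(\cV^{\oplus r+2})$ by the single vector-bundle section $s'_F$ (equivalently, $F(f_0,\ldots,f_{r+1})=0$) together with the linear conditions $f(p_i)\in L_i$. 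The locus $\{F(f_0,\ldots,f_{r+1})=0\}\cap\{f\text{ base-point-free}\}$ is exactly the open part of $\cM_{g,n}(X,d)$ mapping to $\bP(\cV^{\oplus r+2})$, i.e. the space of maps $C\to X$ of degree $d$ (relativized over $\Jac^d(C)$ via the choice of $\cL$ and sections). So locally $T$ is identified with the preimage in $\cM_{g,n}(X,d)$ of $\prod_i L_i$ under $\ev=(\ev_{p_1},\ldots,\ev_{p_n}):\cM_{g,n}(X,d)\to X^n$ — with the domain curve fixed to be $(C,p_1,\ldots,p_n)$ — together with the extra $(e-1)$-fold choices of which point of $L_i\cap X$ equals $f(p_i)$.

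Next, I would invoke the transversality results of \cite{lp}. Since $d\ge2g$ (indeed $d$ satisfies the strong bound of Theorem \ref{main_thm_refined}) and $(C,p_1,\ldots,p_n)$, $x_i\in L_i$ are general, \cite[Proposition 13, Lemma 16, Lemma 21]{lp} (the generic smoothness and transversality statements for the forgetful map on the open stratum of stable maps with smooth domain, together with the count that the geometric Tevelev degree is a genuine transverse count) imply that: the component(s) of $\cM_{g,n}(X,d)$ dominating $\cM_{g,n}\times X^n$ are generically smooth of expected dimension along the fiber over our general point; the map $\tau$ is unramified there; and the fiber over $([(C,p_1,\ldots,p_n)],x_1,\ldots,x_n)$ is reduced, zero-dimensional, and consists exactly of the maps counted by $\Tev^X_{g,d,n}$. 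Replacing each $x_i$ by the line $L_i$ multiplies the count by $e^n$ (the degree of $L_i\cap X$), and transversality persists because the $L_i$ are general and meet $X$ transversely — each of the $e$ points of $L_i\cap X$ contributes a reduced fiber. Since, by Corollary \ref{T_boundary_cor}, there are no \emph{other} points of $T$ (no degenerate $f$), this shows $T$ is globally reduced and zero-dimensional.

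The main obstacle is the gluing step: one must be careful that the \emph{a priori} possibly-excess intermediate schemes $J^{(e)}_F$ and $J_F$ really do become pure of the expected codimension in a neighborhood of every point of $T$, so that the excess classes $[J^{(e)}_F]^{\exc}$, $[J_F]^{\exc}$ agree with the honest fundamental classes there — this is precisely the content of the second sentence of the proposition, and it follows from the local-isomorphism observation above combined with the expected-dimensionality of the fiber product just established. A secondary point requiring care is that fixing the domain curve is harmless: because $(C,p_1,\ldots,p_n)$ is general and $d\ge 2g$, the image of $\tau$ near our point is smooth over the $\cM_{g,n}$-factor, so restricting to a single fiber in the $\cM_{g,n}$-direction preserves reducedness and the zero-dimensionality of the fiber. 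The arithmetic that the resulting count equals $e^n\cdot\Tev^X_{g,d,n}$, and hence (combining with Theorems \ref{virtev_formula} and \ref{virtualtev_hypersurface} via the excess-class computation of Lemmas \ref{exc_class_virproptrans} and the formulas in steps (2)--(3) of \S\ref{setup_section}) equals $e^n\cdot((e-1)!)^n(r+2-e)^g e^{(d-n)e-g+1}$, is then routine.
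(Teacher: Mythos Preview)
Your approach is essentially the paper's: use Corollary \ref{T_boundary_cor} to reduce to base-point-free $f$, observe that near such points the blowup is a local isomorphism so $T$ is identified with a fiber of $\tau:\cM_{g,n}(X,d)\to\cM_{g,n}\times X^n$, and then invoke transversality from \cite{lp}. Two small corrections: the precise reference the paper uses is \cite[Proposition 14]{lp} (unramifiedness of $\tau$ over an open $V\subset X^n$ under the hypothesis $n\ge\max(2g,1)$, which the paper then checks follows from the bound on $d$ in Theorem \ref{main_thm_refined}; your condition $d\ge 2g$ is not the relevant one), and your final paragraph drifts into the content of Corollary \ref{en_factor} and Proposition \ref{degree}, which are separate statements.
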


\begin{proof}
The second claim will follow from the first. Corollary \ref{T_boundary_cor} shows that every point of $T$ corresponds to an honest map $f:C\to X$ of degree $d$ with $f(p_i)\in L_i$. Consider now a non-zero tangent vector $v$ of $T\subset \bP(\cV^{\oplus r+2})\times(\bP^{r+1})^{n}$. We first observe that, because the $L_i,X$ intersect transversely at the $x_i$, and because $f(p_i)$ has been constrained to lie on both $L_i,X$, we have that $v$ is trivial upon projection to the factor $(\bP^{r+1})^{n}$.

From here, one easily verifies that, in fact, $v$ defines a non-trivial relative tangent vector at $[f]$ to the forgetful morphism
\begin{equation*}
\tau:\cM_{g,n}(X,d)\to \cM_{g,n}\times X^n.
\end{equation*}
On the other hand, \cite[Proposition 14]{lp} shows that, over some open subset $V\subset X^n$ (for fixed $(C,p_1,\ldots,p_n)$), such a relative tangent vector cannot exist when $n\ge \max(2g,1)$. This inequality is immediate when $g=0$ and, as one easily checks, follows from the required lower bound on $d$ given in Theorem \ref{main_thm_refined} when $g>0$. The same argument from the proof of Corollary \ref{T_boundary_cor} guarantees that, for a general choice of lines $L_i$, we have $(x''_1,\ldots,x''_n)\in V$ for any $x''_i\in L_i\cap X$. The conclusion follows.
\end{proof}

\begin{cor}\label{en_factor}
We have $\deg(T)=e^n\cdot\Tev^X_{g,d,n}$.
\end{cor}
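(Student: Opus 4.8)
The plan is to unwind the definitions, the geometric content having already been supplied by Proposition \ref{transverse}. By Corollary \ref{T_boundary_cor}, every point $(f,x'_1,\ldots,x'_n)\in T$ has $f$ base-point-free, so $\wt f = f$ is a genuine morphism $C\to X$ of degree $d$; by step (1) of \S\ref{setup_section} the coordinate $x'_i = f(p_i)$ is then determined by $f$, and since $f$ has image in $X$ we get $f(p_i)\in L_i\cap X$. Conversely, any morphism $f:C\to X$ of degree $d$ with $f(p_i)\in L_i$ for all $i$ is base-point-free and satisfies every defining condition of $T$: the vanishing conditions cutting out $J^{(e)}_F$ and $J_F$ hold automatically because $F(f_0,\ldots,f_{r+1})$ is then the zero section of $\cL^{\otimes e}$. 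Hence $T(\bC)$ is in canonical bijection with $\{f:C\to X\text{ of degree }d:\ f(p_i)\in L_i\text{ for all }i\}$.

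First I would split this set according to which of the $e$ points of $L_i\cap X$ is $f(p_i)$. As $L_i$ meets $X$ transversely, $L_i\cap X=\{y_{i,1},\ldots,y_{i,e}\}$ is reduced of cardinality $e$, and the conditions ``$f(p_i)=y_{i,j}$'' are open-and-closed on $T$; writing $\phi=(\phi_1,\ldots,\phi_n)\in\{1,\ldots,e\}^n$, we obtain $T=\bigsqcup_{\phi}T_\phi$ with $T_\phi(\bC)=\{f:C\to X\text{ of degree }d:\ f(p_i)=y_{i,\phi_i}\text{ for all }i\}$, each $T_\phi$ a union of connected components of the reduced scheme $T$ and hence itself reduced of dimension $0$. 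The crucial input is then that, for a general choice of the lines $L_i$ through the general points $x_i$, every one of the $e^n$ tuples $(y_{1,\phi_1},\ldots,y_{n,\phi_n})$ lies in the dense open subset of $X^n$ over which the fiber of $\tau:\cM_{g,n}(X,d)\to\cM_{g,n}\times X^n$ is reduced of cardinality $\Tev^X_{g,d,n}$; this is exactly the genericity statement proved via the map $\lambda$ in Corollary \ref{T_boundary_cor} (applied to the finitely many tuples at once), together with Proposition \ref{transverse} and Definition \ref{geom_tev_def}. It follows that $\#T_\phi(\bC)=\Tev^X_{g,d,n}$ for each of the $e^n$ choices of $\phi$, so that
\[
\deg(T)=\#T(\bC)=\sum_{\phi\in\{1,\ldots,e\}^n}\#T_\phi(\bC)=e^n\cdot\Tev^X_{g,d,n}.
\]

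The only point that deserves care --- and the reason the construction of \S\ref{setup_section} incorporates a general line $L_i$ through each $x_i$ rather than just the point $x_i$ --- is the genericity transfer in the last paragraph: one must know that all $e^n$ intersection tuples are simultaneously general in $X^n$, and that the scheme-theoretic reducedness of $T$ from Proposition \ref{transverse} genuinely matches the set-theoretic count defining $\Tev^X_{g,d,n}$. Both are already contained in the preceding results, so no new argument is required; the corollary is essentially a bookkeeping consequence of them.
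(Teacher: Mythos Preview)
Your proof is correct and follows essentially the same approach as the paper's own proof: both combine Corollary~\ref{T_boundary_cor} and Proposition~\ref{transverse} to identify $T$ (scheme-theoretically) with the disjoint union, over the $e^n$ choices of $x''_i\in L_i\cap X$, of the reduced fibers of $\tau$, each of cardinality $\Tev^X_{g,d,n}$, invoking the $\lambda$-map genericity argument to ensure that all $e^n$ tuples are simultaneously general. Your explicit decomposition $T=\bigsqcup_\phi T_\phi$ spells out in more detail what the paper compresses into a single sentence, but the content is the same.
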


\begin{proof}
Corollary \ref{T_boundary_cor} and Proposition \ref{transverse} show on the one hand that, for a general choice of lines $L_i$, the fibers of the map
\begin{equation*}
\tau:\cM_{g,n}(X,d)\to \cM_{g,n}\times X^n.
\end{equation*}
over any of the $e^n$ points $((C,p_1,\ldots,p_n),x''_1,\ldots,x''_n)$ with $x''_i\in L_i\cap X$ are reduced of dimension 0 and degree $\Tev^X_{g,d,n}$, and on the other hand that, the union of these fibers is (scheme-theoretically) isomorphic to $T$ for the same choice of the $L_i$. The conclusion follows.
\end{proof}

\begin{rem}\label{general_genus0}
Let $X\subset\bP^{r+1}$ be a \textit{general} hypersurface of degree $e\le r-1$. Then, the Kontsevich space $\overline{M}_{0,n}(X,d')$ is irreducible and generically smooth of the expected dimension for all positive $d'\le d$ \cite{hrs,bk,ry}.

If we assume further that $3\le e\le \frac{r+3}{2}$, then the arguments of Proposition \ref{T_boundary} and Corollary \ref{T_boundary_cor} show that every point of $T$ comes from a base-point-free $f$. Here, we use only that $f$ cannot have a double base-point at all of the $p_i$, and do not require any estimates on $h^1(D_{\spine},\overline{f}^{*}T_X)$. Moreover, the arguments of Proposition \ref{transverse} and Corollary \ref{en_factor} go through as well. Thus, the degree of $T$, as computed in the next section, gives the genus 0 geometric Tevelev degrees of any general hypersurface $X$ (up to a factor of $e^n$).
\end{rem}

\subsection{Calculation}

We now compute the degree of $T$. We do so by following the construction; in each step, we obtain a new (excess) cycle class by capping with a class pulled back from the ambient space $\bP(\cV^{\oplus r+2})\times(\bP^{r+1})^{n}$. Let $H=c_1(\cO_{\bP}(1))$ be the relative hyperplane class on $\bP(\cV^{\oplus r+2})$, and let $H_1,\ldots,H_n$ be the hyperplane classes on the factors $\bP^{r+1}$.

\begin{enumerate}
\item[(1)] $Y\subset\bP(\cV^{\oplus r+2})\times(\bP^{r+1})^{n}$ is cut out by pulling back the class of the diagonal $\bP^{r+1}\subset\bP^{r+1}\times\bP^{r+1}$ under the rational map $\ev:\bP(\cV^{\oplus r+2})\times(\bP^{r+1})^{n}\dashrightarrow(\bP^{r+1}\times\bP^{r+1})^n$. Because $\ev$ is given by relative linear projections and the codimension of the base locus is strictly larger than that of the class being pulled back, the class of the subscheme $Y$ is
\begin{equation*}
[Y]=\left(\prod_{i=1}^{n}(H^{r+1}+H^{r}H_i+\cdots+H_i^{r+1})\right)\cap[\bP(\cV^{\oplus r+2})\times(\bP^{r+1})^{n}].
\end{equation*}

\item[(2)] Arguing as in Lemma \ref{exc_class_virproptrans}, we have
\begin{equation*}
[J^{(e)}_{F}]^{\exc}=\left(\prod_{i=1}^{n}\prod_{k=1}^{e}((k-1)H+(e+1-k)H_i)\right)\cap[Y]
\end{equation*}
in $A_{\dim(Y)-ne}(J^{(e)}_{F})$. Indeed, $J^{(e)}_{F}\subset Y$ is cut out by sections of the (pulled back) line bundles $$\cO_{\bP(\cV^{\oplus r+2})}(e)\otimes\cO_{Y}(-(e+1-k)E_i)$$ for $k=1,2,\ldots,e$ and exceptional divisors $E_i\subset Y_{p_i}$.

(Compare \cite[\S 5.3]{bp}.)

\item[(3)] Consider the top Chern class
\begin{equation*}
c_{t}(\nu^{*}(\pi_{*}(\cP^{\otimes e}(-ep_1-\cdots-ep_n))\otimes\cO_{\bP(\cV^{\oplus r+2})}(e)))
\end{equation*}
where $t=(d-n)e-g+1$ is the rank of the bundle $\pi_{*}(\cP^{\otimes e}(-ep_1-\cdots-ep_n))$ on $\Jac^d(C)$. This is equal to
\begin{equation*}
\sum_{m=0}^{g}c_m(\pi_{*}(\cP^{\otimes e}(-ep_1-\cdots-ep_n)))\cdot(eH)^{t-m}
\end{equation*}
where we have dropped the $\nu^{*}$ on the bundle pulled back from $\Jac^d(C)$. Note that the Chern classes in the first factor vanish above degree $\dim(\Jac^d(C))=g$.

Now, a standard computation (see e.g. \cite[Chapter VIII, \S2]{acgh}) shows that
\begin{equation*}
c_m(\pi_{*}(\cP^{\otimes e}(-ep_1-\cdots-ep_n)))=\frac{(-e^2)^m}{m!}\cdot\theta^m\in H^{2m}(\Jac^d(C)),
\end{equation*}
where $\theta\in H^{2}(\Jac^d(C))$ is the class of the theta divisor. 

Therefore,
\begin{equation*}
[J_F]^{\exc}=\left(e^{t}\cdot\sum_{m=0}^{g}\frac{(-e)^m}{m!}\cdot\theta^mH^{t-m}\right)\cap [J^{(e)}_F]^{\exc}.
\end{equation*}

\item[(4)] $T\subset J_F$ is cut out by $r$ linear conditions on each factor of $\bP^{r+1}$, and has the expected dimension of 0. Therefore,
\begin{equation*}
[T]=\left(\prod_{i=1}^{n}H_i^{r}\right)\cap[J_F]^{\exc}
\end{equation*}
Note that this holds even if the intermediate schemes $J_F,J^{(e)}_F$ have the wrong dimension; restricting the construction to a neighborhood of $T$ avoiding all excess components yields simply the fundamental class of $T$ in the end.
\end{enumerate}

\begin{prop}\label{degree}
The degree of $T$ as a 0-cycle on $\bP(\cV^{\oplus r+2})\times(\bP^{r+1})^{n}$ is
\begin{equation*}
(e!)^n\cdot(r+2-e)^g\cdot e^{(d-n)e+1}.
\end{equation*}
\end{prop}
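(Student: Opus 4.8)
The plan is to evaluate $\deg T=\int_{\bP(\cV^{\oplus r+2})\times(\bP^{r+1})^n}[T]$ by substituting into one another the four cycle-class identities (1)--(4) above, pushing forward first along the $n$ factors of $\bP^{r+1}$ and then along the projective bundle $\nu:\bP(\cV^{\oplus r+2})\to\Jac^d(C)$.

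I would begin with the $(\bP^{r+1})^n$-directions. After combining (1)--(4), the part of $[T]$ depending on the $i$-th copy of $\bP^{r+1}$ consists of $H_i^r$ (from (4)), the factor $\prod_{k=1}^e\big((k-1)H+(e+1-k)H_i\big)$ (from (2)), and the factor $\sum_{a=0}^{r+1}H^aH_i^{r+1-a}$ (from (1)), everything else being pulled back from $\bP(\cV^{\oplus r+2})$. The key observation is that the $k=1$ term of $\prod_{k=1}^e\big((k-1)H+(e+1-k)H_i\big)$ is simply $eH_i$, since its $H$-coefficient $k-1$ vanishes; hence this product has $H_i$-order at least $1$, its lowest term being $e\big(\prod_{k=2}^e(k-1)\big)H_iH^{e-1}=e!\,H_iH^{e-1}$. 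Since the $i$-th factor of $[T]$ already carries $H_i^r$, the total power of $H_i$ there is forced up to exactly $r+1$ (anything larger dies on $\bP^{r+1}$), which pins everything down: from $\prod_{k=2}^e$ one must take every $H$-term, and from $\sum_aH^aH_i^{r+1-a}$ one must take $H^{r+1}$. Pushing forward the $i$-th $\bP^{r+1}$ therefore replaces the $i$-dependent part by $e!\,H^{e+r}$, and doing this for all $i$ leaves
\begin{equation*}
\deg T=(e!)^n\,e^t\int_{\bP(\cV^{\oplus r+2})}\sum_{m=0}^g\frac{(-e)^m}{m!}\,\theta^m\,H^{\,n(e+r)+t-m},\qquad t=(d-n)e-g+1.
\end{equation*}

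Next I would push forward along $\nu$. Using the dimension hypothesis \eqref{vir_dims_match_hypersurface}, one checks the numerical identity $n(e+r)+t-m=\big((r+2)(d-g+1)-1\big)+(g-m)$ for every $m$, so that $\nu_*\big(H^{\,n(e+r)+t-m}\big)=s_{g-m}(\cV^{\oplus r+2})$ is a Segre class. Since $c(\cV)=e^{-\theta}$ (the computation underlying (3); see \cite[Chapter VIII, \S2]{acgh}), we get $c(\cV^{\oplus r+2})=e^{-(r+2)\theta}$, hence $s_{g-m}(\cV^{\oplus r+2})=\tfrac{(r+2)^{g-m}}{(g-m)!}\theta^{g-m}$. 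Combining with the Poincaré formula $\int_{\Jac^d(C)}\theta^g=g!$ collapses the sum by the binomial theorem:
\begin{equation*}
\deg T=(e!)^n\,e^t\sum_{m=0}^g\binom{g}{m}(-e)^m(r+2)^{g-m}=(e!)^n\,e^t\,(r+2-e)^g,
\end{equation*}
and recalling $t=(d-n)e-g+1$ gives the asserted degree.

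The computation is essentially bookkeeping, so the two points that need care are (i) the claim that exactly one monomial survives in each $\bP^{r+1}$-pushforward — in particular the degenerate $k=1$ factor $eH_i$, which is precisely what yields the clean factor $(e!)^n$ instead of an unwieldy sum — and (ii) the numerical identity $n(e+r)+t-m=(r+2)(d-g+1)-1+(g-m)$, where the constraint \eqref{vir_dims_match_hypersurface} enters and which ensures that it is exactly the Segre class $s_{g-m}$ (and no other term) that appears in the pushforward along $\nu$. I do not anticipate any difficulty with the final binomial identity or the Jacobian integral.
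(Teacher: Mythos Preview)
Your proof is correct and follows essentially the same approach as the paper: isolate the unique monomial in each $H_i$ that survives (forcing the $eH_i$ from the $k=1$ factor, the $(k-1)H$ terms for $k\ge2$, and $H^{r+1}$ from the diagonal class), then push forward along $\nu$ via Segre classes of $\cV^{\oplus r+2}$ and collapse the resulting sum by the binomial theorem and Poincar\'e's formula. The only cosmetic difference is that you make explicit the numerical identity $n(e+r)+t-m=(r+2)(d-g+1)-1+(g-m)$ pinning down the Segre index, which the paper leaves implicit.
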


\begin{proof}
Applying the projection formula, we need to multiply the formulas for the four classes computed above, and extract all coefficients with degree exactly $r+1$ in each $H_i$. Because we automatically get $r$ factors of $H_i$ from step (4) and one factor of $H_i$ from the term $(eH_i)$ in step (2), we must take the term $H^{r+1}$ in all $n$ factors in step (1), and the terms $(e-1)H,(e-2)H,\ldots,H$ each $n$ times (in addition to the contributions from $eH_i$) in step (2).

We therefore find that
\begin{equation*}
[T]=(H_1\cdots H_n)^{r+1}\cdot (e)!^n \cdot e^{t}\cdot H^{n(r+1)+(e-1)n}\cdot\sum_{m=0}^{g}\frac{(-e)^m}{m!}\cdot\theta^mH^{t-m},
\end{equation*}

Pushing forward to $\Jac^d(C)$ and applying the projection formula, we obtain
\begin{equation*}
\nu_{*}[T]=(e)!^n\cdot e^{t} \cdot\sum_{m=0}^{g}\frac{(-e)^m}{m!}\cdot\theta^m\cdot s_{g-m}(\cV^{\oplus r+2}).
\end{equation*}
Finally, \cite[Chapter VIII, \S2]{acgh} (see also \cite[Proof of Theorem 2.9]{BDW}) shows that 
\begin{equation*}
s_{g-m}(\cV^{\oplus r+2})=(r+2)^{g-m}\cdot\frac{\theta^{g-m}}{(g-m)!},
\end{equation*}
so we conclude that
\begin{align*}
\deg(T)&=(e)!^n\cdot e^{t} \cdot\frac{\deg(\theta^g)}{g!}\cdot\sum_{m=0}^{g}\binom{g}{m}(-e)^{m}(r+2)^{g-m}\\
&=(e!)^n\cdot(r+2-e)^g\cdot e^{(d-n)e-g+1}.
\end{align*}
where we apply Poincar\'{e}'s formula \cite[p. 25]{acgh} and have substituted back $t=(d-n)e-g+1$.
\end{proof}

We now conclude the main Theorem.

\begin{proof}[Proof of Theorem \ref{main_thm_refined}]
Combine Proposition \ref{degree} and Corollary \ref{en_factor}.
\end{proof}

\subsection{More general insertions}\label{insertions}

We observe here that one can just as easily replace the incidence condition in step (4) that $x'_i\in L_i$ with the condition that $x'_i$ lie in a linear space $X_i\cong\bP^{\ell_i}\subset\bP^{r+1}$. In order for the expected number of maps $f:C\to X$ of degree $d$ satisfying the property that $f(p_i)\in X_i$ to be finite, we require
\begin{equation*}
d(r+2-e)=r(n+g-1)-\sum_{i=1}^{n}\ell_i.
\end{equation*}
The corresponding virtual count is ${\langle \Omega_1,\ldots,\Omega_n\rangle}^{X}_{g,n}$, where $\Omega_i=[X_i\cap X]$, see (\ref{insertion_intro}) of \S\ref{further}.

One may construct a scheme $T_{\ell_1,\ldots,\ell_n}$ of \textit{expected} dimension 0 as above, parametrizing maps $f:C\to X$ of degree $d$ with $f(p_i)\in X_i$. Write now
\begin{equation*}
(1+z+\cdots+z^{r+1})\cdot\prod_{j=0}^{e-1}(j+(e-j)z)=\alpha_1z+\cdots+\alpha_{e+r+1}z^{e+r+1},
\end{equation*}
where $\alpha_1,\ldots,\alpha_{e+r+1}\in\bZ_{>0}$. If it happens that $T_{\ell_1,\ldots,\ell_n}$ has actual dimension 0, then proceeding as in Proposition \ref{degree}, we compute
\begin{equation*}
\deg(T_{\ell_1,\ldots,\ell_n})=(r+2-e)^g\cdot e^{(d-n)e-g+1}\cdot \prod_{i=1}^{n}\alpha_{\ell_i},
\end{equation*}
which matches the virtual degree ${\langle \Omega_1,\ldots,\Omega_n\rangle}^{X}_{g,n}$ upon dividing by $e^n$ as in the proof of Theorem \ref{main_thm}, see \cite[pp. 21-23]{buch_zoom}. We do not consider the question of transversality here.

\end{document}